\def\R{{\mathbb R}}
\def\N{{\mathbb N}}
\newcommand{\ds}{\displaystyle}
\newcommand{\de}{\mathrm{d}}
\crefname{hypothesis}{Hypothesis}{Hypotheses}
\title{Polynomial Interpolation of Function Averages on Interval Segments}
\author{L. Bruni Bruno\thanks{Dipartimento di Matematica \lq\lq Tullio Levi-Civita\rq\rq, Universit\`a di Padova, \email{bruni@math.unipd.it}}
\and W. Erb\thanks{Dipartimento di Matematica \lq\lq Tullio Levi-Civita\rq\rq, Universit\`a di Padova, \email{erb@math.unipd.it}}}
\begin{document}

\maketitle

% REQUIRED
\begin{abstract}
Motivated by polynomial approximations of differential forms, we study analytical and numerical properties of a polynomial interpolation problem that relies on function averages over interval segments. The usage of segment data gives rise to new theoretical and practical aspects that distinguish this problem considerably from classical nodal interpolation. We will analyse fundamental mathematical properties of this problem as existence, uniqueness and numerical conditioning of its solution. We will provide concrete conditions for unisolvence, explicit Lagrange-type basis systems for its representation, and a numerical method for its solution. To study the numerical conditioning, we will provide concrete bounds of the Lebesgue constant in a few distinguished cases.      
\end{abstract}

% REQUIRED
\begin{keywords}
Polynomial interpolation on segments; unisolvence; Lebesgue constant; numerical conditioning; segmental averages of functions; polynomial approximation of differential forms; Whitney forms
\end{keywords}

% REQUIRED
\begin{AMS}
41A05, 41A10, 41A25, 65D05
\end{AMS}

\section{Introduction} \label{sec1}

Data interpolation is a pervasive tool in numerical analysis and a cornerstone of modern algorithms in data science and machine learning. At its simplest, this technique consists in constructing a univariate polynomial from prescribed function values. In more abstract frameworks, interpolation is described as determining the element of a linear subspace based on the information provided by a set of linear functionals, see \cite[Section 3.2]{Atkinson1998} or \cite[Chapter II]{Davis75}.  

For Whitney differential forms \cite{BossavitBook, Whitneybook}, one may formulate such a generalized interpolation problem in the following way \cite{Rapetti07,RB09}: is it possible to stably reconstruct, in the sense of \cite{GenLeb}, a polynomial approximant of a Whitney form by its integrals over simplicial subdomains? When the dimension of the ambient space is greater than one, the answer is mostly conjectural, and only numerical solutions have been offered to this problem so far \cite{BruniThesis}. On the other hand, for differential forms in one dimension, this problem can be boiled down to the reconstruction of a polynomial from function averages on interval segments (see Fig. \ref{fig:firstexamples}). This allows us to utilize well-known tools from univariate interpolation and approximation theory \cite{Achieser92, Cheney66, trefappr, Trefethen} in order to obtain concrete theoretical results about unisolvence and well-posedness of this problem, and this is exactly what we will do in this work.  

\begin{figure}[htbp]
    \centering
    \includegraphics[width=7.5cm]{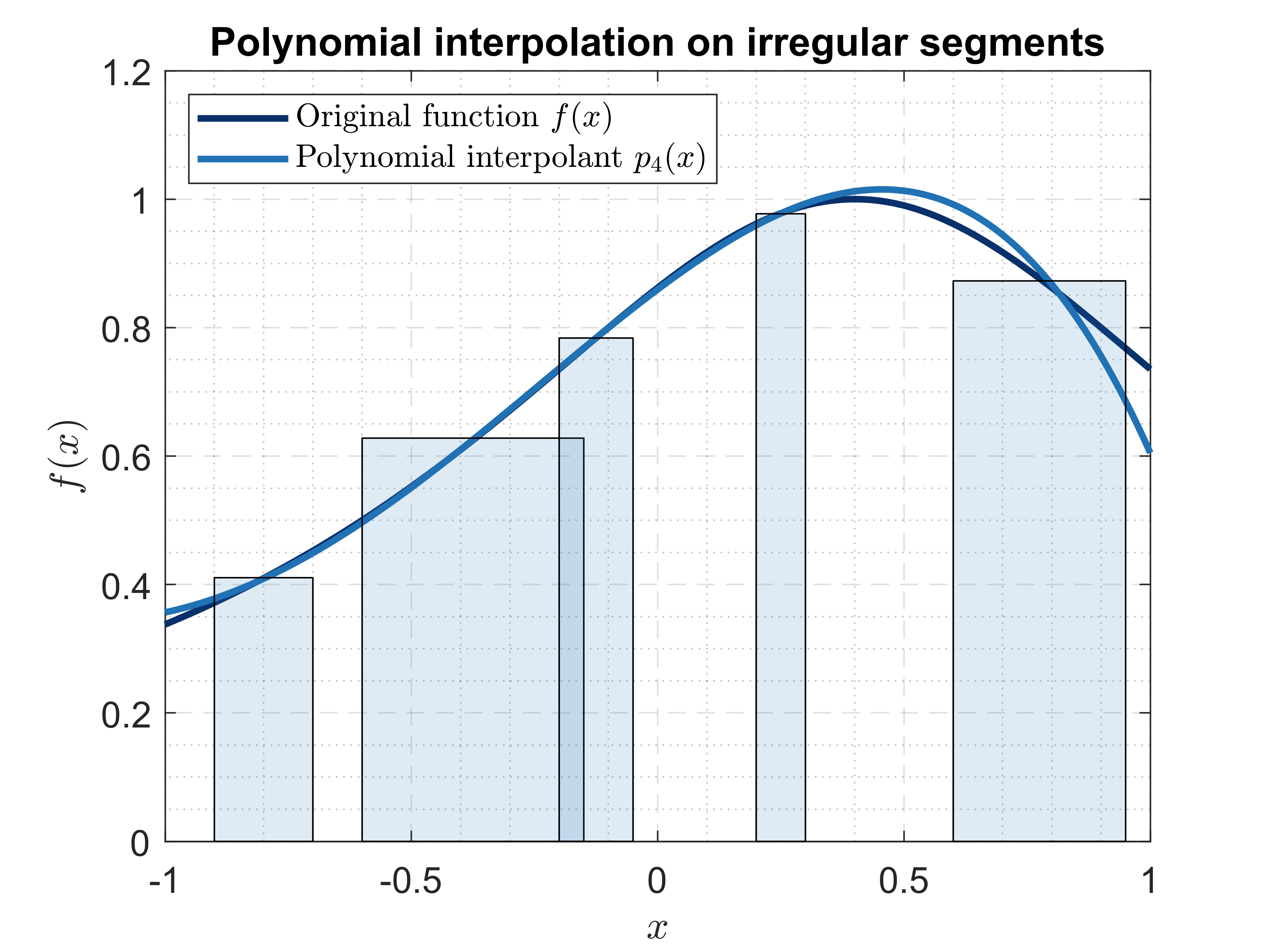}
    \includegraphics[width=7.5cm]{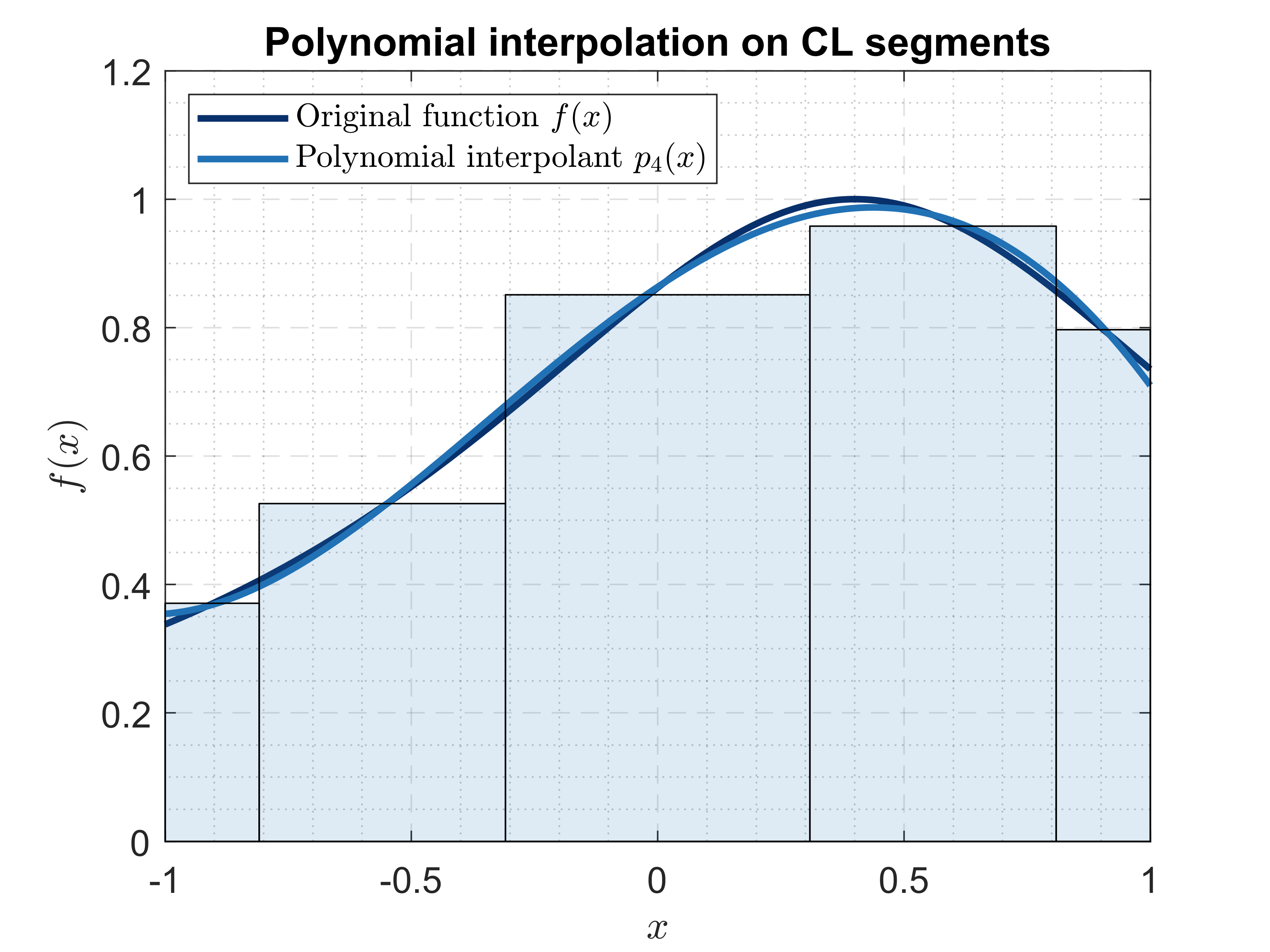}
    \vspace{-.7cm}
    \caption{Examples of polynomial interpolation associated to function averages on $5$ segments. The area of the rectangles corresponds to the integral of the function $f$ (or the interpolating polynomial) over the respective segments. Left: irregular partially overlapping segments. Right: Chebyshev-Lobatto segments in the classes \ref{itm:one} and \ref{itm:two}.}
    \label{fig:firstexamples}
\end{figure}

\subsection*{Problem formulation} \label{sec-problem}
We consider an essentially bounded function $f: I \to \R$ on the interval $I = [-1,1]$. The given data $\mu_i$, $i \in \{1, \ldots, r\}$ consists of integrals of $f$ on $r$ interval segments $s_i = [\alpha_i, \beta_i] \subseteq I$ with $-1 \leq \alpha_i < \beta_i \leq 1$ for $i \in \{1, \ldots, r\}$. More concretely, we have the following integral measurements of $f$ at disposition: 
\begin{equation} \label{eq:segmentdata}
\mu_i = \mu(f,s_i) \doteq \int_{s_i} f(x) \de x = \int_{\alpha_i}^{\beta_i} f(x) \de x, \quad i \in \{1, \ldots, r\}.
\end{equation} 
From the $r$ measurements \eqref{eq:segmentdata}, the \emph{interpolation problem} consists in determining a polynomial $p_{r-1} \in \mathbb{P}_{r-1}$ of degree $r-1$ such that the following $r$ interpolation conditions are satisfied:
\begin{equation} \label{eq:interpolationcondition}
\int_{s_i} p_{r-1}(x) \de x = \mu_i, \quad i \in \{1, \ldots, r\}.
\end{equation}
In his core, this problem is similar to nodal polynomial interpolation in which the interpolating polynomial is determined by $r$ function evaluations $f(\xi_i)$ on $r$ node points $-1 \leq \xi_1 < \ldots \xi_{r} \leq 1$. However, the usage of segments instead of nodes causes substantial deviations that have to be taken into account in the numerical analysis. First of all, as the data is given in terms of integrals \eqref{eq:segmentdata} the problem can be stated more generally for the space $L_{\infty}(I)$ of essentially bounded functions, whereas for nodal function evaluations typically the space $C(I)$ of continuous functions has to be considered. Further, to describe a segment $s_i$ we require two parameters (the endpoints $\alpha_i$ and $\beta_i$) in comparison to one parameter $\xi_i$ for the nodes. This makes it more difficult to characterize the cases when the interpolation conditions \eqref{eq:interpolationcondition} provide a unique polynomial $p_{r-1}$. In fact, existence and uniqueness of a solution $p_{r-1}$ cannot always be guaranteed in the segmental setting \eqref{eq:interpolationcondition}. 

\subsection*{Considered scenarios} To obtain concrete results, we will restrict ourselves to the following three particular classes of segments in which the number of free parameters is reduced:
\begin{enumerate}[label = (C\arabic*)]
\item \label{itm:one} \emph{Chains of intervals}: for $r + 1$ nodes $-1 = \xi_0 < \xi_1 < \cdots < \xi_r = 1$ we assume that the segments $s_i$ are given as 
\[s_i = [\xi_{i-1}, \xi_{i}], \quad i \in \{1, \ldots, r\}.\]
In particular, we have $\alpha_i = \xi_{i-1}$ and $\beta_i = \xi$. Furthermore $\cup_{i =1}^r s_i = I$. 
\item \label{itm:two} \emph{Segments with uniform arc-length}: for $r$ values $0 < \tau_1 < \cdots < \tau_r < \pi$ and $0 < \rho< \pi$, we consider the interval segments $s_i$ determined by $\alpha_i = \cos(\tau_i + \rho)$ and $\beta_i = \cos(\tau_i - \rho)$. The length of these segments is given by
\[ |s_i| = \cos(\tau_i - \rho) - \cos(\tau_i + \rho) = 2 \sin(\tau_i) \sin(\rho).\]
If the interval segments $s_i$ are mapped on the unit circle (using the lower half-circle if $\tau_i - \rho$ or $\tau_i + \rho$ are not in $[0,\pi]$) then the arc-length of every mapped segment on the unit circle is equal to $2 \rho$. This distance will be referred to as arc-length of the segments $s_i$. The points $\cos \tau_i$ will be denoted as arc-midpoints of the interval $s_i$.
\item \label{itm:three} \emph{Segments with identical left endpoints}: for $-1\leq \alpha<\beta_1 < \cdots < \beta_r \leq 1$, we consider the $r$ segments $s_i = [\alpha,\beta_i]$, $i \in \{1, \ldots, r\}$. In particular, all left endpoints $\alpha_i = \alpha$ are identical. In this case, we have $s_i \subseteq s_{i+1}$ and $|s_i|< |s_{i+1}|$ for all $i \in \{1, \ldots, r-1\}$.   
\end{enumerate}  

\begin{example}
We give a few examples of segments that will be used throughout the article.
\begin{enumerate}[label = (\roman*)]
\item {\it Equidistant segments:} based on uniform nodes $\mathcal{X}^{\mathrm{eq}} = \{\xi_i^{\mathrm{eq}} = -1 + \frac{2i}{r}\; : \; i \in \{0, \ldots, r\}\}$ the set $\mathcal{S}^{\mathrm{eq}}$ consists of the uniform segments $s_i^{\mathrm{eq}} = [\xi_{i-1}^{\mathrm{eq}},\xi_i^{\mathrm{eq}}]$, $i \in \{1, \ldots, r\}$ having two consecutive nodes of $\mathcal{X}^{\mathrm{eq}}$ as endpoints. The uniform segments $\mathcal{S}^{\mathrm{eq}}$ are in the class \ref{itm:one}. 
\item {\it Chebyshev-Lobatto (CL) segments:} to define the Cheybshev-Lobatto segments in $I$, we use the equidistant nodes $\nu_i = \frac{\pi}{r} i$, $i \in \{0, \ldots, r\}$, on $[0,\pi]$ and introduce the CL nodes as
$\xi_i^{\mathrm{CL}} = \cos(\nu_{r-i})$. Then, the CL segments $s_i^{\mathrm{CL}}$ are given as
\[ s_i^{\mathrm{CL}} = [\xi_{i-1}^{\mathrm{CL}}, \xi_i^{\mathrm{CL}}], \quad i \in \{1, \ldots, r\}. \] The CL segments $\mathcal{S}^{\mathrm{CL}}$ are contained in the classes \ref{itm:one} and \ref{itm:two}. The segments $s_i^{\mathrm{CL}}$ have arc-midpoint $\tau_i = (\nu_{r-i} + \nu_{r-i+1})/2 = \frac{2 (r-i)-1}{2r} \pi$ and uniform arc-radius $\rho = \frac{\pi}{2r}$. The length of the CL segments is given as
\[|s_i^{\mathrm{CL}}| = |\cos(\nu_{r-i}) - \cos(\nu_{r-i+1})| = 2\sin\left(\frac{\pi}{2 r}\right) \sin\left(\frac{2 i-1}{2r} \pi \right).\]
\item {\it Overlapping segments:} the interpolation problem on the two families (i) and (ii) above can be rewritten using overlapping segments. We will do this exemplarily for the CL nodes. We define the overlapping Chebyshev-Lobatto segments $s_i^{\mathrm{CLO}}$ in $\mathcal{S}^{\mathrm{CLO}}$ as
\[s_i^{\mathrm{CLO}} = [-1, \xi_i^{\mathrm{CL}}], \quad i \in \{1, \ldots, r\}.\]
The respective sets $\mathcal{S}^{\mathrm{CLO}}$ and $\mathcal{S}^{\mathrm{eqO}}$ are all contained in the class \ref{itm:three}. Note that, although the segments $\mathcal{S}^{\mathrm{CL}}$ and $\mathcal{S}^{\mathrm{CLO}}$ generate the same polynomial interpolant $p_{r-1}$ for a given function $f$, the class \ref{itm:three} requires different analysis tools than the class \ref{itm:one}. This will get clear in the study of the Lebesgue constants.   
\end{enumerate}
\end{example}

\subsection*{Well-posedness} When solving the interpolation problem \eqref{eq:interpolationcondition} not only existence and uniqueness are relevant, but also the numerical conditioning of the problem that determines how much errors in the input data affect the solution. For nodal interpolation, it is well-known that the wrong placement of the interpolation nodes leads to examples of tremendous ill-posedness \cite{Runge}. The respective numerical conditioning is typically measured in terms of the so-called Lebesgue constant \cite{Brutman1,trefappr}. Consistently with the nodal case, very different behaviours of the Lebesgue constants have been numerically observed also in the segmental case \cite{BruniThesis}. As theoretical estimates are still lacking, our goal is to provide concrete bounds for the segmental Lebesgue constants and to reveal respective relationships to the nodal case. As for nodal interpolation, we will use the uniform norm 
\[ \|f\| = \underset{x \in I}{\mathrm{ess\, sup}} |f(x)|\] 
to measure errors in the input and the output. Since we are dealing with function averages as given information, it is useful to consider the following equivalent formulation of the uniform norm:
\begin{equation} \label{eq:charuniformnorm}
\Vert f \Vert = \sup_{-1\leq a < b \leq 1}  \frac{1}{b-a}  \left  \vert \int_{a}^b f(x) \de x \right\vert.
\end{equation}

\subsection*{Main contributions} 
In this work, we provide a thorough theoretical and numerical analysis of the segmental interpolation problem \eqref{eq:interpolationcondition}. This includes the study of existence and uniqueness properties, an algorithm for the calculation of the interpolating polynomial, as well as the estimate of the segmental Lebesgue constant providing a measure for the numerical conditioning of the problem. In Proposition \ref{prop:lagrbasis} and Proposition \ref{prop:lagrbasisLB}, we compute explicit Lagrange-type bases for the interpolation problem that are dual to function averages over segments. In Proposition \ref{prop:uniquenessC2} we study a class of segments in which non-unisolvence can be characterized explicitly. Regarding numerical conditioning, we show in Theorem \ref{thm:operatornorm} that for non-overlapping segments the Lebesgue constant coincides with the operator norm of the interpolation operator. These results finally converge into Eq. \eqref{eq:uniformbounds}, where we prove that the Lebesgue constant associated with equidistant segments suffers from an exponential growth, and Corollary \ref{cor:logarithmicgrowth}, where we show that Chebyshev-Lobatto segments offer a slow logarithmic growth of the Lebesgue constant.

\subsection*{Outline} In Section \ref{sect:bases}, we start to investigate the segmental interpolation problem by representing it in relevant basis systems. Based on these representations, we derive then in Section \ref{sect:uniqueness} concrete results about the unisolvence of segment sets and offer explicit formulas for the Lagrange bases in the classes \ref{itm:one} and \ref{itm:three}. This is preparatory to Section \ref{sect:Leb}, where the segmental Lebesgue constant is recalled and characterised; moreover, theoretical features of this quantity are proved. Exploiting these results in Section \ref{sect:EstLeb}, we offer theoretical bounds for the aforementioned relevant examples of segments. We dedicate Section \ref{sect:WFperspective} to the parallelism with Whitney forms and gather final conclusions in Section \ref{sec-conclusion}.

\section{Polynomial bases and numerical solution schemes} \label{sect:bases}
To calculate the interpolating polynomial $p_{r-1}$ from the conditions \eqref{eq:interpolationcondition}, a polynomial basis is required. In the following we consider three basis systems that are useful for this purpose.

\subsection{Monomial basis}

We first formulate the interpolation problem in terms of the monomial basis $\{x^{j-1}\}_{j=1}^r$. If the solution is written as
$p_{r-1}(x) = \sum_{j = 0}^{r-1} c_j x^j$, 
the interpolation conditions \eqref{eq:interpolationcondition} yield the system of equations
\[ \mu_i = \sum_{j = 0}^{r-1} c_j \frac{1}{j+1} (\beta_i^{j+1} - \alpha_i^{j+1}), \quad i \in \{1, \ldots, r\}. \]
We can write this more compactly using the Vandermonde matrix $\mathbf{V}^{(M)}(\mathcal{S}) \in \R^{r \times r}$ with the entries
\begin{equation} \label{eq:VandermMatrixMonomial}
\mathbf{V}^{(M)}_{i,j}(\mathcal{S})  = \frac{1}{j} (\beta_i^{j} - \alpha_i^{j}), \quad i,j \in \{1, \ldots, r\}.
\end{equation}
Then, the expansion coefficients $c_j$ of $p_{r-1}$ are obtained as the solution of the linear system
\begin{equation} \label{eq:VandermMonomial}
 \mathbf{V}^{(M)}(\mathcal{S}) \begin{pmatrix}
	c_0 \\ \vdots \\ c_{r-1}
\end{pmatrix} = \begin{pmatrix}
	\mu_1 \\ \vdots \\ \mu_r
\end{pmatrix}.
\end{equation}
The existence and uniqueness of the interpolation problem \eqref{eq:interpolationcondition} is determined by the invertibility of $\mathbf{V}^{(M)}(\mathcal{S})$ \cite[Lemma 2.2.1]{Davis75}. If $\mathbf{V}^{(M)}(\mathcal{S})$ is invertible, we call the set $\mathcal{S} = \{s_1, \ldots, s_r\}$ \emph{unisolvent} for the polynomial space $\mathbb{P}_{r-1}$.  
Compared to the nodal setting, the characterization of unisolvent sets $\mathcal{S}$ is, in general, more complex. We will study this issue more profoundly in the next section. 

\begin{remark} Although the monomial basis is very useful for theoretical purposes it has some drawbacks in practical computations. The Vandermonde matrix $\mathbf{V}^{(M)}(\mathcal{S})$ gets highly ill-conditioned already for small orders $r$. This fact typically leads to numerical instability in the calculation of the expansion coefficients $c_j$ and favors the selection of other basis systems for the space $\mathbb{P}_{r-1}$.  
\end{remark}

\subsection{Chebyshev polynomials of second kind}
For computational purposes, a more suitable basis for $\mathbb{P}_{r-1}$ is given by the Chebyshev polynomials $\{U_{j-1}\}_{j=1}^r$ of the second kind:
\[U_{j-1}(x) = \frac{\sin( j \arccos x )}{ \sin( \arccos x)}, \quad j \in \N.\]
For a solution with the expansion
$p_{r-1}(x) = \sum_{j = 0}^{r-1} a_j U_j(x)$
the interpolation conditions \eqref{eq:interpolationcondition} can be written as
\begin{align} \label{eq:interpcondChebyshev}
\mu_i &= \int_{\alpha_i}^{\beta_i} \sum_{j = 0}^{r-1} a_j U_j(x) \de x =  \sum_{j = 0}^{r-1} a_j \int_{a_i}^{b_i}U_j(x) \de x = \sum_{j = 0}^{r-1} a_j \frac{1}{j+1}( T_{j+1}(\beta_i) - T_{j+1}(\alpha_i))\\ &= \sum_{j = 1}^{r} a_{j-1} \frac{2}{j} \sin \left( j \frac{\arccos \alpha_i + \arccos \beta_i}{2}\right) \sin \left(j\frac{\arccos \alpha_i - \arccos \beta_i}{2}\right), \notag
\end{align} 
where $T_{j}(x) = \cos (j \arccos x)$ denote the Chebyshev polynomial of the first kind of degree $j$. 
In this derivation, we used the well-known fact that the derivative of the polynomial $T_{j}$ corresponds to $j U_{j-1}$. Now, if we define the Vandermonde matrix $\mathbf{V}^{(U)}(\mathcal{S}) \in \R^{r \times r}$ using the entries   
\begin{equation} \label{eq:VandermMatrixChebyshev}
\mathbf{V}^{(U)}_{i,j}(\mathcal{S}) =  \frac{2}{j} \sin \left( j \frac{\arccos \alpha_i + \arccos \beta_i}{2}\right) \sin \left(j\frac{\arccos \alpha_i - \arccos \beta_i}{2}\right), \quad i,j \in \{1, \ldots, r\},
\end{equation}
we obtain the expansion coefficients $a_j$ by calculating the solution of the linear system
\begin{equation} \label{eq:VandermSystemChebyshev}
 \mathbf{V}^{(U)}(\mathcal{S}) \begin{pmatrix}
	a_0 \\ \vdots \\ a_{r-1}
\end{pmatrix} = \begin{pmatrix}
	\mu_1 \\ \vdots \\ \mu_r
\end{pmatrix}.
\end{equation}
We use the Chebyshev system  $\{U_{j-1}\}_{j=1}^r$ and the Vandermonde matrix $\mathbf{V}^{(U)}(\mathcal{S})$ to calculate the interpolating polynomials $p_{r-1}$. The entire numerical procedure is summarized in Algorithm \ref{alg:1}.

\begin{algorithm}
\caption{Polynomial interpolation for function averages on interval segments}
\label{alg:1}
 			\begin{tabular}{p{15cm}*{1}{c}}
 				\vskip 0.01 cm
 				{\fontfamily{pcr} \selectfont INPUT:} Set of segments $\mathcal{S}=\{s_i = [\alpha_i, \beta_i], \hskip 0.1cm i \in \{1,\ldots,r\}\}$, 						\vskip 0.08 cm
 				\hskip 1.4cm a corresponding set of average values $\{\mu_i = \mu(f,s_i),\hskip 0.1cm i \in \{1,\ldots,r\} \}$,
 				\vskip 0.08 cm \hskip 1.4 cm and the desired evaluation point(s) $x \in I$.
 				\vskip 0.12 cm
 				{\fontfamily{pcr} \selectfont OUTPUT:} Polynomial interpolant $p_{r-1}(x)$. 
 				\vskip 0.24 cm
 				{\fontfamily{pcr} \selectfont Step 1:}
 				Using the segments $s_i = [\alpha_i,\beta_i]$, calculate the Vandermonde matrix $\mathbf{V}^{(U)}(\mathcal{S})$ in \eqref{eq:VandermMatrixChebyshev}.
 				\vskip 0.12 cm
 				{\fontfamily{pcr} \selectfont Step 2:}
 				Using the data $\{\mu_i\}_{i=1}^r$, solve the linear system \eqref{eq:VandermSystemChebyshev} to obtain the coefficients $\{a_j\}_{j=0}^{r-1}$.
 				\vskip 0.12 cm
 				{\fontfamily{pcr} \selectfont Step 3:}
 				Evaluate the polynomial interpolant $\ds p_{r-1}(x) = \sum_{j=0}^{r-1} a_j U_j(x)$.
 				\vskip 0.12 cm
 				\hskip 1.6 cm (The polynomials $U_j(x)$ can be evaluated be their three-term recurrence relation)
 				\\[\smallskipamount]
 			\end{tabular}
 \end{algorithm}
 
\subsection{Lagrange basis}

Lagrange polynomials are widely used tools both in interpolation theory and in finite element methods to represent interpolating polynomials in such a way that the given data information can be inserted directly. We recapitulate first the case of nodal interpolation.

\subsubsection{Nodal Lagrange basis}

In nodal interpolation, data is provided via function values $f(\xi_i)$ at $r$ distinct nodes $ -1 \leq \xi_1 < \xi_2 < \ldots < \xi_r \leq 1 $ in $I$. The Lagrange polynomials of degree $ r-1 $ form a basis $ \{ \ell_{\xi_j} \}_{j=1}^r $ for the space $ \mathbb{P}_{r-1}$ that is uniquely determined by the conditions
$ \ell_{\xi_j} (\xi_i) = \delta_{ij}$ for $i,j \in \{1, \ldots, r\}$. Because of this property the interpolation polynomial can be written as
\begin{equation} \label{eq:interpLagrbasisNodal}
     p_{r-1}(x) = \sum_{j=1}^r f(\xi_j) \ell_{\xi_j} (x).
\end{equation}
\vspace{-.2cm}
\begin{remark} \label{rmk:variouslagrangebasis}
An explicit formula for the $j$-th Lagrange polynomial is given by
\begin{equation} \label{eq:explicitformulaLagrbasis}
    \ell_{\xi_j}(x) = \prod_{\substack{i=1 \\ i\ne j}}^r \frac{x-\xi_i}{\xi_j - \xi_i} .
\end{equation}
We have the identity $\sum_{j=1}^r \ell_{\xi_j}(x) = 1$ and, hence, the relationship $\sum_{j=1}^r \ell_{\xi_j}'(x) = 0$ for the derivatives. On the other hand, any subset of $ r-1 $ elements out of the system $ \{ \ell_{\xi_j}' (x) \}_{j=1}^r $ is a basis for $ \mathbb{P}_{r-2}$. Further, the logarithmic derivative gives the following expression for $ \ell_{\xi_j}'(x)$
 \begin{equation} \label{eq:explicitformulaLagrDerivative}
 \ell_{\xi_j}' (x) = \ell_{\xi_j} (x) \sum_{\substack{i = 1 \\ i \ne j}}^r \frac{1}{x-\xi_{i}} .
\end{equation}   
\end{remark}

\subsubsection{Lagrange bases for interval segments}
As in the nodal setting, we can define a Lagrange basis for function data on interval segments as the system of polynomials $\{\ell_{s_j}\}_{j = 1}^r$ of degree $r-1$ that satisfy the conditions
\begin{equation} \label{eq:Lagrcondition}
\int_{s_i} \ell_{s_j}(x) \de x = \delta_{ij}, \quad i,j \in \{1, \ldots, r\}. 
\end{equation} 
The interpolation polynomial in the generalized segmental setting can then be written as
\begin{equation} \label{eq:interpLagrbasisSegmental}
     p_{r-1}(x) = \sum_{j=1}^r \mu(f,s_j) \ell_{s_j}(x) .
\end{equation}
In comparison to the nodal case, there are no general explicit formulas for the Lagrange polynomials $\ell_{s_j}(x)$. In the particular settings \ref{itm:one} and \ref{itm:three}, we will be able to derive an explicit expression for the polynomials $\ell_{s_j}(x)$, avoiding in this way the inversion of a (possibily ill-conditioned) Vandermonde matrix. The Lagrange polynomials $\ell_{s_j}(x)$ can nevertheless be calculated using Algorithm \ref{alg:1} by using the data vectors $\mu_i = \delta_{ij}$, $i \in \{1, \ldots,r\}$ as right hand sides in \eqref{eq:VandermSystemChebyshev}.

\section{Existence and uniqueness} \label{sect:uniqueness}

Before studying the scenarios \ref{itm:one}, \ref{itm:two} and \ref{itm:three} in more detail, we state a general result on the uniqueness of the polynomial interpolant in case that the segments in $\mathcal{S}$ do not overlap.

\begin{proposition} \label{prop:uniquenessnonoverlapping}
If the segments in $\mathcal{S} = \{s_1, \ldots, s_r\}$ are non-overlapping in the sense that $|s_i \cap s_j| = 0$ for $i \neq j$, then the set $\mathcal{S}$ is unisolvent for $\mathbb{P}_{r-1}$.
\end{proposition}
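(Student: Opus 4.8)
The plan is to reduce unisolvence to a kernel statement and then to exploit the antiderivative together with Rolle's theorem. By \cite[Lemma 2.2.1]{Davis75}, invertibility of $\mathbf{V}^{(M)}(\mathcal{S})$ (and hence unisolvence of $\mathcal{S}$) is equivalent to the implication that the only polynomial $p \in \mathbb{P}_{r-1}$ satisfying the homogeneous conditions $\int_{s_i} p(x)\,\de x = 0$ for all $i \in \{1,\dots,r\}$ is $p \equiv 0$. I would therefore suppose that $p$ lies in this kernel and aim to conclude $p \equiv 0$.

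The key device is the antiderivative $P(x) = \int_{-1}^{x} p(t)\,\de t$, which is a polynomial of degree at most $r$. Each homogeneous condition then rewrites as $P(\beta_i) - P(\alpha_i) = \int_{s_i} p = 0$, i.e.\ $P(\alpha_i) = P(\beta_i)$. Since $\alpha_i < \beta_i$ and $P$ is differentiable, Rolle's theorem supplies for every $i$ a point $\eta_i$ in the open interval $(\alpha_i,\beta_i)$ with $P'(\eta_i) = p(\eta_i) = 0$.

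To finish I would invoke the non-overlapping hypothesis. Because $|s_i \cap s_j| = 0$ for $i \neq j$, the open intervals $(\alpha_i,\beta_i)$ are pairwise disjoint: two segments of positive length whose interiors met would have an intersection of positive measure. Consequently the Rolle points $\eta_1,\dots,\eta_r$ are $r$ pairwise distinct zeros of $p$. As $p \in \mathbb{P}_{r-1}$ has degree at most $r-1$, it cannot have $r$ distinct roots unless it vanishes identically, whence $p \equiv 0$ and the claim follows.

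The step I expect to require the most care is the distinctness of the Rolle points: one must check that the measure-zero intersection condition genuinely forces disjoint \emph{open} intervals, noting that neighbouring segments may still share an endpoint, which is harmless since the $\eta_i$ are interior. A convenient preliminary is to relabel the segments so that $\alpha_1 \le \beta_1 \le \alpha_2 \le \cdots \le \beta_r$, after which the pairwise disjointness of the open intervals, and thus of the points $\eta_i$, is immediate.
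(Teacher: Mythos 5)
Your proposal is correct and follows essentially the same route as the paper: reduce unisolvence to the homogeneous kernel, produce an interior zero of $p$ in each segment (the paper invokes the mean value theorem directly, which is exactly your Rolle-on-the-antiderivative step), and conclude from the pairwise distinctness of these $r$ zeros that $p\equiv 0$. Your extra care about why measure-zero overlap forces disjoint open intervals is a slightly more explicit version of the paper's remark that the segments intersect at most in their endpoints.
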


\begin{proof}
The set $\mathcal{S}$ is unisolvent for $\mathbb{P}_{r-1}$ if, for each $ p_{r-1} \in \mathbb{P}_{r-1} $, the $ r $ conditions 
\begin{equation} \label{eq:condunisolvence}
    \int_{s_i} p_{r-1}(x) \de x = 0, \quad i = \{ 1, \ldots, r \},
\end{equation}
imply that $ p_{r-1} (x) = 0 $ for each $ x $. Consider $ p_{r-1} \in \mathbb{P}_{r-1} $ and assume that \eqref{eq:condunisolvence} holds. Then, by the Lagrange Theorem, for each $ s_i = [\alpha_i, \beta_i] $ there exists a point $ \xi_i $ such that $ \alpha_i < \xi_i < \beta_i $ where $ p_{r-1}(\xi_i) = 0 $. Since the intervals intersect, at most, in their endpoints, all the nodes $ \{ \xi_i \}_{i=1}^r $ are pairwise distinct, hence unisolvent for $ \mathbb{P}_{r-1} $. It follows that $ p_{r-1} = 0 $ everywhere and hence that the set of segments $ \mathcal{S} $ is unisolvent for $ \mathbb{P}_{r-1} $ as well. 
\end{proof}

\subsection{The case \ref{itm:one}: an explicit cardinal basis for chains of segments} \label{sect:explCB}

With $r+1$ nodes $-1 = \xi_0 < \xi_1 < \cdots < \xi_r = 1$ given, the case \ref{itm:one} considers polynomial interpolation using the $r$ interval segments $s_i = [\xi_{i-1}, \xi_{i}]$. We know from Proposition \ref{prop:uniquenessnonoverlapping} that these segment sets are unisolvent. We want to derive now an explicit formula for the polynomials $ \{ \ell_{s_1}, \ldots, \ell_{s_r} \} $ such that 
$$ \int_{s_i} \ell_{s_j}(x) \de x = \delta_{ij}.$$
 The simplification to concatenated segments in \ref{itm:one} allows to express the polynomials $\ell_{s_i}$ in terms of linear combinations of the derivatives $ \{\ell_{\xi_j}' \}_{j=0}^r$ of the nodal Lagrange polynomials $ \ell_{\xi_j}$'s defined upon the nodes $\{\xi_0, \ldots, \xi_r\}$. 
More precisely, the system $\{ \ell_{\xi_j}' \}_{j=0}^r $ generates, but is not a basis of $\mathbb{P}_{r-1}$, since we have $ r+1 > r $ elements, see Remark \ref{rmk:variouslagrangebasis}. 
Nevertheless, since any subset of $r$ elements of $\{ \ell_{\xi_j}' \}_{j=0}^r $ defines a basis for $\mathbb{P}_{r-1}$, there are coefficients $ a_0^{(j)}, \ldots, a_r^{(j)} $ (not unique) such that 
$$ \left\{ \ell_{s_1}(x) = \sum_{j=0}^r a_j^{(1)} \ell_{\xi_j}'(x),\; \ldots\;,\; \ell_{s_r}(x) = \sum_{j=0}^r a_j^{(r)} \ell_{\xi_j}'(x) \right\} $$
is the desired basis. We can thus expand
\begin{align*}
	\delta_{ij} &= \int_{s_i} \ell_{s_j}(x) \de x =  \int_{s_i} \sum_{k=0}^r a_k^{(j)} \ell_{\xi_k}'(x) \de x = \sum_{k=0}^r a_k^{(j)} \int_{s_i} \ell_{\xi_k}'(x) \de x \\ & = \sum_{k=0}^r a_k^{(j)} \int_{\xi_{i-1}}^{\xi_{i}} \ell_{\xi_k}'(x) \de x = \sum_{k=0}^r a_k^{(j)} \left(\ell_{\xi_k} (\xi_{i}) - \ell_{\xi_k} (\xi_{i-1}) \right),
\end{align*}
where the last equality is granted by the fundamental theorem of calculus. Hence
$$ 0 = \sum_{k=0}^r a_k^{(j)} \ell_{\xi_k} (\xi_{i}) - \sum_{k=0}^r a_k^{(j)} \ell_{\xi_k} (\xi_{i-1}) \quad \text{if } i \ne j $$
and
$$ 1 = \sum_{k=0}^r a_k^{(j)} \ell_{\xi_k} (\xi_{i}) - \sum_{k=0}^r a_k^{(j)} \ell_{\xi_k} (\xi_{i-1}) \quad \text{if } i = j .$$
Now, plugging in the fact that $ \{\ell_{\xi_i}\}_{i=0}^r $ is the Lagrange basis with respect to the $ \xi_i $'s, we get
$$ a_{i}^{(j)} = a_{i-1}^{(j)} \quad \text{if } i \ne j, $$
and
$$ a_{j}^{(j)} = a_{j-1}^{(j)} + 1 .$$ 
Observe that the solutions of these systems have one free parameter, which is a consequence of the fact that $ r + 1 $ points define $ r $ intervals. By imposing $a_0^{(j)} = 0$, one solution that characterizes the Lagrange polynomials $\ell_{s_j}$, $j \in \{1, \ldots, r\}$ is given as
\begin{equation} \label{eq:lagrange1form}
     \ell_{s_j}(x) = \sum_{k = j}^r \ell_{\xi_k}'(x) .
\end{equation}
In fact, one can directly check that the right hand side of this identity satisfies the Lagrange conditions \eqref{eq:Lagrcondition}. Furthermore, the triangular expansion \eqref{eq:lagrange1form} in terms of the basis $\{\ell_{s_k}'\}_{k=1}^r$ immediately implies that also the polynomials $\{\ell_{s_j}\}_{j=1}^r$ form a basis for $\mathbb{P}_{r-1}$. We have thus proved the following.

\begin{proposition} \label{prop:lagrbasis}
    In the case \ref{itm:one} of concatenated segments $s_i = [\xi_{i-1}, \xi_i]$, $i\in \{1, \ldots r\}$, the segments $\{s_1, \ldots, s_r\}$ are unisolvent for $ \mathbb{P}_{r-1}$ and the polynomials 
    $$ \ell_{s_j}(x) = \sum_{k=j}^r \ell_{\xi_k}'(x), \quad j \in \{1, \ldots, r\},$$
    form a cardinal Lagrange basis of $ \mathbb{P}_{r-1}$.
\end{proposition}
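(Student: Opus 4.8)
The plan is to split the statement into its two assertions: unisolvence of the concatenated segments, and the claim that the given polynomials form a cardinal Lagrange basis. The first is essentially free. The segments $s_i = [\xi_{i-1},\xi_i]$ overlap only at shared endpoints, so $|s_i \cap s_j| = 0$ for $i \neq j$, and unisolvence for $\mathbb{P}_{r-1}$ is then an immediate consequence of Proposition \ref{prop:uniquenessnonoverlapping}; no separate argument is required. So the real content is the explicit formula for $\ell_{s_j}$.

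For that, rather than solving the coefficient recurrence directly, I would argue through the primitive, which simultaneously proves existence of the interpolant and produces the basis in one stroke. Fix $p \in \mathbb{P}_{r-1}$ and set $F(x) = \int_{-1}^x p(t)\,\de t \in \mathbb{P}_r$, normalized by $F(\xi_0) = F(-1) = 0$. Each datum is then a telescoping difference $\mu_i = \int_{s_i} p = F(\xi_i) - F(\xi_{i-1})$, so the cumulative sums satisfy $F(\xi_m) = \sum_{i=1}^m \mu_i$ for $m \in \{0,\ldots,r\}$ (empty sum at $m=0$). Hence $F$ is exactly the nodal interpolant of the cumulative data on the $r+1$ nodes $\{\xi_0,\ldots,\xi_r\}$, i.e. $F(x) = \sum_{m=0}^r \big(\sum_{i=1}^m \mu_i\big)\,\ell_{\xi_m}(x)$ with $\ell_{\xi_m}$ the degree-$r$ nodal Lagrange polynomials. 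Differentiating and exchanging the order of summation yields $p(x) = F'(x) = \sum_{m=1}^r \big(\sum_{i=1}^m \mu_i\big)\ell_{\xi_m}'(x) = \sum_{i=1}^r \mu_i \sum_{m=i}^r \ell_{\xi_m}'(x)$, and comparing with the representation \eqref{eq:interpLagrbasisSegmental} reads off $\ell_{s_j}(x) = \sum_{k=j}^r \ell_{\xi_k}'(x)$.

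As an independent check, and to cover a reader who prefers direct verification, I would confirm the cardinality conditions \eqref{eq:Lagrcondition} via the fundamental theorem of calculus: $\int_{s_i} \ell_{s_j} = \sum_{k=j}^r \big(\ell_{\xi_k}(\xi_i) - \ell_{\xi_k}(\xi_{i-1})\big)$, and since $\ell_{\xi_k}(\xi_m) = \delta_{km}$, the first sum contributes $1$ precisely when $j \leq i$ and the second precisely when $j \leq i-1$, so their difference is $\delta_{ij}$. The basis property then follows from unisolvence: the functionals $p \mapsto \int_{s_i} p$ form a total system on the $r$-dimensional space $\mathbb{P}_{r-1}$, so any family dual to them — as the $\ell_{s_j}$ are — is automatically linearly independent, hence a basis (equivalently, the triangular dependence on $\{\ell_{\xi_k}'\}$ gives independence directly).

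The one genuine subtlety worth flagging is that $\{\ell_{\xi_k}'\}_{k=0}^r$ is a generating set of $r+1$ polynomials for the $r$-dimensional space $\mathbb{P}_{r-1}$, carrying the single relation $\sum_{k=0}^r \ell_{\xi_k}' = 0$ that comes from $\sum_k \ell_{\xi_k} \equiv 1$ (see Remark \ref{rmk:variouslagrangebasis}). Consequently the coefficients expressing each $\ell_{s_j}$ in this system are \emph{not} unique, and a gauge must be fixed; the normalization $F(\xi_0) = 0$ above (equivalently $a_0^{(j)} = 0$) is exactly that choice and is what makes the formula well defined. Everything else reduces to routine index bookkeeping in the telescoping sums, so I expect no serious obstacle beyond being careful with this one-dimensional redundancy.
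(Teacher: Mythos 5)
Your proposal is correct and complete, but the derivation of the explicit formula takes a genuinely different route from the paper's. The paper starts from the ansatz $\ell_{s_j}=\sum_{k=0}^r a_k^{(j)}\ell_{\xi_k}'$, imposes the conditions \eqref{eq:Lagrcondition}, reduces them via the fundamental theorem of calculus and the nodal duality $\ell_{\xi_k}(\xi_i)=\delta_{ki}$ to the recurrences $a_i^{(j)}=a_{i-1}^{(j)}$ for $i\neq j$ and $a_j^{(j)}=a_{j-1}^{(j)}+1$, and then fixes the gauge $a_0^{(j)}=0$ to solve them. You instead work with the primitive $F(x)=\int_{-1}^x p(t)\,\de t$, observe that the segment data telescope into the cumulative values $F(\xi_m)=\sum_{i=1}^m\mu_i$, identify $F$ with the degree-$r$ nodal interpolant of these values, and differentiate; swapping the order of summation then produces the formula. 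Your gauge $F(\xi_0)=0$ is exactly the paper's $a_0^{(j)}=0$, and your closing FTC verification of $\int_{s_i}\ell_{s_j}=\delta_{ij}$ is the same computation the paper alludes to with ``one can directly check.'' What your route buys is a constructive explanation of \emph{why} the formula holds --- it makes the identity $\Pi_r(\mathcal{S})f'=(\Pi_{r+1}(\mathcal{X})f)'$, which the paper only invokes later in Lemma \ref{lem:lowerbound} via a citation, essentially transparent --- at the cost of implicitly relying on uniqueness of nodal interpolation in $\mathbb{P}_r$; the paper's coefficient computation is more pedestrian but self-contained. One small remark: reading off $\ell_{s_j}$ by ``comparing with \eqref{eq:interpLagrbasisSegmental}'' tacitly uses that the data vector $(\mu_1,\ldots,\mu_r)$ can be arbitrary, which is unisolvence again; since you verify the cardinality conditions directly afterwards, this is harmless. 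Your handling of the one-dimensional redundancy in $\{\ell_{\xi_k}'\}_{k=0}^r$ and of the basis property matches the paper's.
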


\begin{figure}[htbp]
    \centering
    \includegraphics[width=7.5cm]{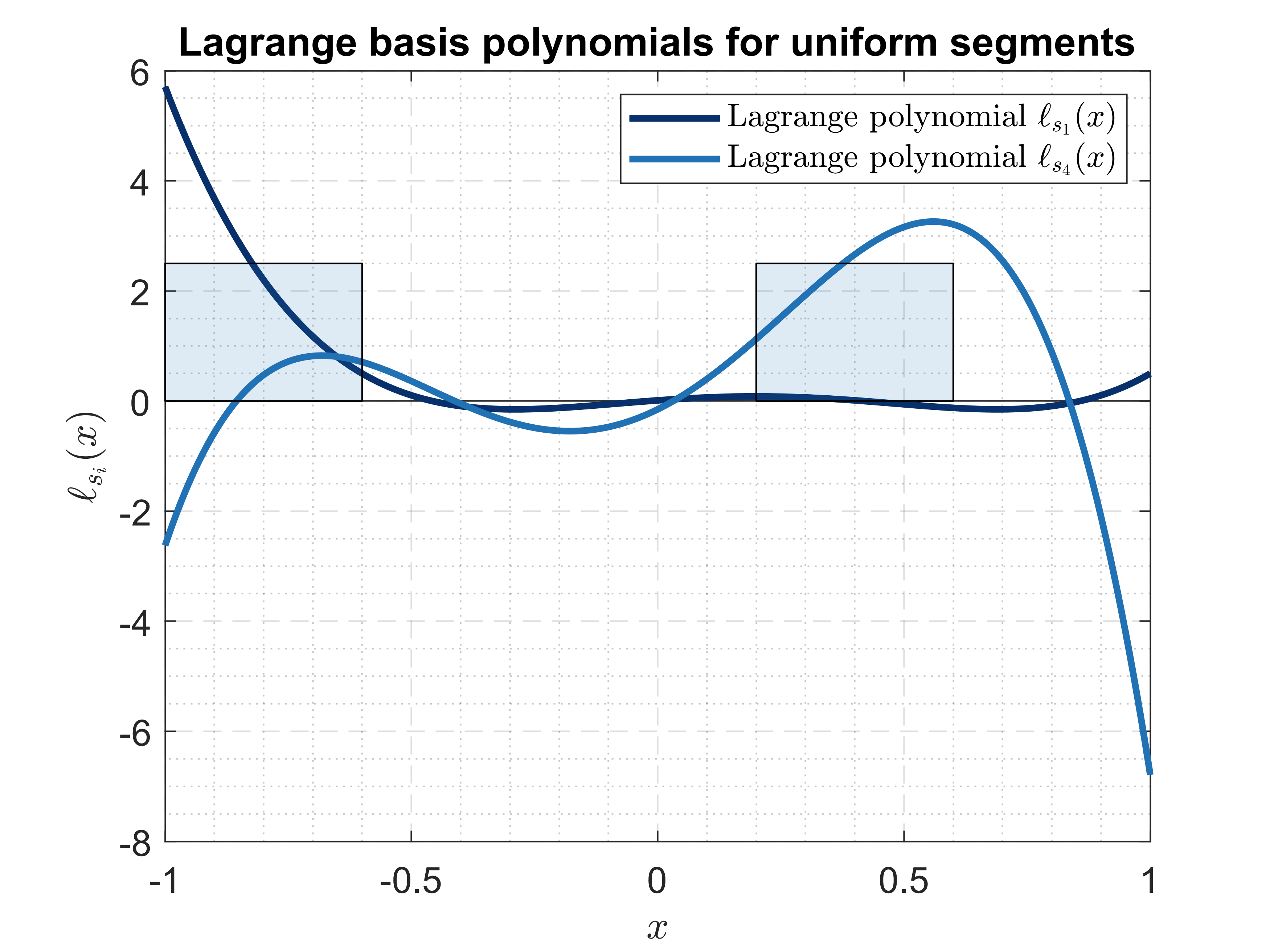}
    \includegraphics[width=7.5cm]{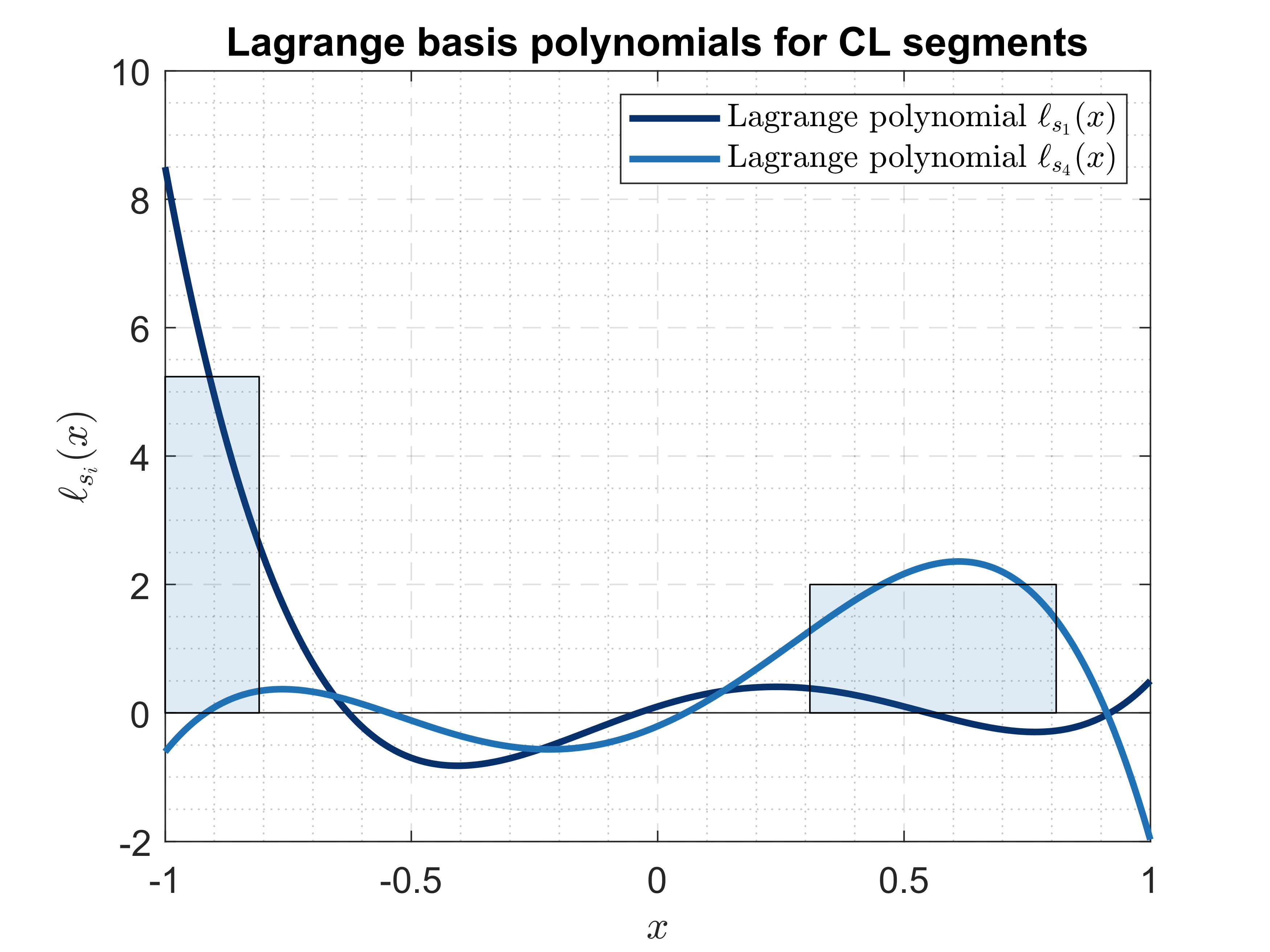}
    \vspace{-.7cm}
    \caption{Lagrange basis polynomials $\ell_{s_1}$ and $\ell_{s_4}$ for $r = 5$ interval segments in the class \ref{itm:one}. The integral of the polynomials on the segments corresponds to the area of the visualized rectangles. Left: Lagrange polynomials for equidistant segments. Right: Lagrange polynomials for Chebyshev-Lobatto segments.}
    \label{fig:lagrangeforms}
\end{figure}

\begin{remark}
Proposition \ref{prop:lagrbasis} and Eq. \eqref{eq:explicitformulaLagrDerivative} allow to calculate the polynomials $\ell_{s_j}$ explicitly. A depiction of some Lagrange basis polynomials is provided in Fig. \ref{fig:lagrangeforms}. We saw in the derivation of Proposition \ref{prop:lagrbasis} that the representation of  $\ell_{s_j}$ in terms of the generating system $\{\ell_{\xi_k}'\}_{k=0}^r$ is not unique. If we use the identity $\sum_{k=0}^r \ell_{\xi_k}'(x) = 0$ seen in Remark \ref{rmk:variouslagrangebasis}, we can also write
    $$ \sum_{k=0}^{j-1} \ell_{\xi_k}'(x) = - \sum_{k=j}^r \ell_{\xi_k}'(x).$$
Hence, we can write the $j$-th element of the Lagrange basis in Proposition \ref{prop:lagrbasis} equivalently as
    $$  \ell_{s_j}(x) = - \sum_{k=0}^{j-1} \ell_{\xi_k}'(x).$$
    \end{remark}
    
\subsection{The case \ref{itm:two}: uniqueness for proper choices of the arc-length}
We consider now sets of segments with constant arc-length. In particular, we assume that the borders of the $r$ intervals $s_i$ are given as $\alpha_i = \cos(\tau_i + \rho)$ and $\beta_i = \cos(\tau_i - \rho)$
with $0 < \tau_1 < \cdots < \tau_r < \pi$ denoting the arc-midpoints of the segments and $0 < \rho< \pi$ the respective constant arc-radius. For this type of segments, the representation of the interpolation polynomial in terms of the Chebyshev basis $\{U_{j-1}\}_{j=1}^{r}$ is particularly useful. If we insert these segments in the interpolation condition \eqref{eq:interpcondChebyshev} based on the basis polynomials $U_j$, we get the simplification
\begin{equation*} 
\mu_i = \sum_{j = 1}^{r} \frac{2 a_{j-1} }{j} \sin \left( j \tau_i \right) \sin \left(j \rho \right).
\end{equation*}
If we devide both sides by the length $|s_i| = 2 \sin(\tau_i) \sin(\rho)$ of the segments, we can conclude 
\begin{equation} \label{eq:interpcondC2}
\frac{\mu_i}{|s_i|} = \sum_{j = 1}^{r} a_{j-1} \frac{\sin \left(j \rho \right)}{j \sin(\rho)} 
\frac{\sin \left( j \tau_i \right)}{\sin(\tau_i)} = \sum_{j = 1}^{r} a_{j-1} \frac{\sin \left(j \rho \right)}{j \sin(\rho)} U_{j-1}(\cos \tau_j).
\end{equation}
We can interpret the term on the right hand side of \eqref{eq:interpcondC2} as a polynomial $q_{r-1}(x) = \sum_{j=0}^{r-1} b_{j} U_j(x)$ of degree $r-1$ evaluated at the nodes $\cos \tau_j$. This polynomial  interpolates the $r$ values $\frac{\mu_i}{|s_i|}$ at the distinct points $\cos \tau_i \in I$ and is therefore uniquely determined. Between the expansion coefficients $\{b_j\}_{j=0}^{r-1}$ of the polynomial $q_{r-1}$ in the basis $\{U_j\}_{j=0}^{r-1}$ and the expansion coefficients $\{a_j\}_{j=0}^{r-1}$ of the polynomial $p_{r-1}$ we have the relation
\begin{equation} \label{eq:relationcoefficentsC2}
b_j = a_j \frac{\sin \left((j+1) \rho \right)}{(j+1) \sin(\rho)}, \quad j \in \{0, \ldots, r-1\}.
\end{equation} 
The coefficients $a_j$ can be recovered from the coefficients $b_j$ if and only if for the arc-radius $0<\rho<\pi$ we have $\rho \notin \{\frac{k \pi}{j} \: |\: j,k \in \{1, \ldots, r\}, \; j > k\}$.  We thus get the following result on the existence and uniqueness in the case \ref{itm:two}:

\begin{proposition} \label{prop:uniquenessC2}
    In the case \ref{itm:two} of segments $s_i = [\cos(\tau_i + \rho), \cos(\tau_i - \rho)]$, $i\in \{1, \ldots r\}$, with constant arc-radius $0 < \rho < \pi$, the segments $\{s_1, \ldots, s_r\}$ are unisolvent
  for $\mathbb{P}_{r-1}$ if and only if 
  $$\rho \notin \left\{\frac{k \pi}{j} \: |\: j,k \in \{1, \ldots, r\}, \; j > k \right\}.$$  
In case of unisolvence, the interpolating polynomial $p_{r-1}$ can be obtained by calculating the nodal interpolant $q_{r-1}$ for the nodes $\{\cos \tau_i\}_{i=1}^r$ and using the relation \eqref{eq:relationcoefficentsC2} between the expansion coefficients of $p_{r-1}$ and $q_{r-1}$. 
\end{proposition}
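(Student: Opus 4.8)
The plan is to read off unisolvence from the factorization of the interpolation map that is already assembled before the statement. Recall that $\mathcal{S}$ is unisolvent for $\mathbb{P}_{r-1}$ exactly when the linear map sending the Chebyshev-$U$ coefficients $(a_0,\dots,a_{r-1})$ of $p_{r-1}$ to the data $(\mu_1,\dots,\mu_r)$ is a bijection of the two $r$-dimensional spaces. The identity \eqref{eq:interpcondC2} decomposes this forward map $a\mapsto\mu$ as a composition of three steps: the \emph{diagonal} rescaling $a_j\mapsto b_j=a_j\,\frac{\sin((j+1)\rho)}{(j+1)\sin\rho}$ from \eqref{eq:relationcoefficentsC2}; the evaluation of the polynomial $q_{r-1}=\sum_{j=0}^{r-1}b_jU_j$ at the $r$ abscissae $\cos\tau_i$; and the multiplication of each value $q_{r-1}(\cos\tau_i)$ by the length $|s_i|=2\sin(\tau_i)\sin(\rho)$ to recover $\mu_i=|s_i|\,q_{r-1}(\cos\tau_i)$.

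First I would argue that the last two steps are invertible for every admissible configuration. Since $0<\tau_1<\cdots<\tau_r<\pi$, the nodes $\cos\tau_1>\cdots>\cos\tau_r$ are pairwise distinct, so evaluating a basis of $\mathbb{P}_{r-1}$ at them gives a nonsingular (nodal) Vandermonde matrix; and each length $|s_i|$ is strictly positive because $0<\tau_i<\pi$ and $0<\rho<\pi$. Consequently the full map is invertible \emph{if and only if} the diagonal step is, that is, if and only if $\sin((j+1)\rho)\neq 0$ for every $j\in\{0,\dots,r-1\}$. In the affirmative case this also yields the stated reconstruction: interpolate the values $\mu_i/|s_i|$ nodally to obtain the $b_j$, then invert \eqref{eq:relationcoefficentsC2} to recover $a_j=b_j\,\frac{(j+1)\sin\rho}{\sin((j+1)\rho)}$, which is well defined precisely because no denominator $\sin((j+1)\rho)$ vanishes.

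It then remains to match the condition ``$\sin((j+1)\rho)\neq 0$ for all $j\in\{0,\dots,r-1\}$'' with the excluded set. Setting $m=j+1\in\{1,\dots,r\}$, the equation $\sin(m\rho)=0$ holds iff $\rho=k\pi/m$ for some $k\in\Z$, and the restriction $0<\rho<\pi$ forces $1\le k\le m-1$, hence $m\ge 2$. Relabelling $(m,k)$ as $(j,k)$ with $j,k\in\{1,\dots,r\}$ and $j>k$ reproduces exactly $\{k\pi/j:\ j,k\in\{1,\dots,r\},\ j>k\}$. Verifying that this relabelling is a bijection of index descriptions, with matching range constraints, is the only mildly fiddly part of the argument; no genuine obstacle arises, since the substantive computation reducing class \ref{itm:two} to a rescaled nodal problem is already in place.

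Finally, to make the \emph{only if} implication transparent rather than purely abstract, I would exhibit an explicit kernel element when $\rho$ lies in the excluded set. If $\rho=k\pi/m_0$ with $m_0\in\{2,\dots,r\}$, then $\sin(m_0\rho)=\sin(k\pi)=0$, and taking $p_{r-1}=U_{m_0-1}\in\mathbb{P}_{r-1}$ (a nonzero polynomial, as $m_0-1\le r-1$) the simplified form of \eqref{eq:interpcondChebyshev} valid for \ref{itm:two} gives $\mu_i=\frac{2}{m_0}\sin(m_0\tau_i)\sin(m_0\rho)=0$ for all $i$. Thus the interpolation map has a nontrivial kernel and cannot be invertible, confirming non-unisolvence and completing the equivalence.
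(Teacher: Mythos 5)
Your proof is correct and follows essentially the same route as the paper: both divide the interpolation conditions by $|s_i|$ to reduce the problem to nodal interpolation of $\mu_i/|s_i|$ at the distinct arc-midpoints $\cos\tau_i$, and both read off unisolvence from the invertibility of the diagonal relation \eqref{eq:relationcoefficentsC2} in the Chebyshev-$U$ basis, i.e.\ from $\sin((j+1)\rho)\neq 0$. Your explicit kernel element $U_{m_0-1}$ for the \emph{only if} direction is a nice concrete addition, but it does not change the substance of the argument.
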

	
\begin{remark}
In the limit $\rho \to 0$ (or also $\rho \to \pi$), we get the identity $a_j = b_j$. This limiting case corresponds exactly to the case of nodal interpolation in the points $\cos \tau_i$, $i \in \{1, \ldots r\}$. Nodal interpolation can therefore be regarded as a limiting case of interpolation on segments when the arc-length of the segments tends to zero.
\end{remark}

\subsection{The case \ref{itm:three}: a Lagrange basis for segments with the same left endpoint} \label{sect:explLB}  
With a proper scaling of the intervals and a rearrangement of the available data the case \ref{itm:three} can be mapped to the case \ref{itm:one}, the only conceptual difference being a different structuring of the segments $s_i$. The interesting aspects of the case \ref{itm:three} are that the segments are overlapping and that the cardinal basis can be expressed in a very simple form. 

The segments in \ref{itm:three} have the particular form $s_i = [\alpha,\beta_i]$ with $-1 \leq \alpha < \beta_1 < \cdots < \beta_r \leq 1 $. We consider the nodal Lagrange basis $\{\ell_{\alpha}, \ell_{\beta_1}, \cdots \ell_{\beta_r}\}$ with respect to the node set $\{\alpha, \beta_1, \ldots , \beta_r\}$ that forms a basis for the space $\mathbb{P}_r$. Then the $r$ derivatives $\ell_{\beta_i}'(x)$ satisfy
\[ \int_{\alpha}^{\beta_j}\ell_{\beta_i}'(x) \de x = \ell_{\beta_i}(\beta_j) - \ell_{\beta_i}(\alpha) = \ell_{\beta_i}(\beta_j) = \delta_{ij}, \quad i,j \in \{1, \ldots, r\}.\]    
This is exactly the Lagrange condition for the segments $s_i = [\alpha,\beta_i]$. We can therefore conclude that the Lagrange basis with respect to the segments $\{s_1, \ldots, s_r\}$ can be written as $\ell_{s_i}(x) = \ell_{\beta_i}'(x)$, $i \in \{1, \ldots, r\}$. As the derivatives $\{\ell_{\beta_j}'\}_{j = 1}^r$ form a basis for the space $\mathbb{P}_{r-1}$ (see Remark \ref{rmk:variouslagrangebasis}), we can also conclude that the set $\{s_1, \ldots, s_r\}$ is unisolvent. We thus get the following result. 

\begin{proposition} \label{prop:lagrbasisLB}
    In the case \ref{itm:three} of segments of the form $s_i = [\alpha, \beta_i]$, $-1 \leq \alpha < \beta_1 < \cdots < \beta_r \leq 1 $, the set $\{s_1, \ldots, s_r\}$ is unisolvent for $ \mathbb{P}_{r-1}$ and the polynomials 
    $$ \ell_{s_j}(x) = \ell_{\beta_j}'(x), \quad j \in \{1, \ldots, r\},$$
    form a cardinal Lagrange basis of $\mathbb{P}_{r-1}$ with respect to the set $\{s_1, \ldots, s_r\}$. 
\end{proposition}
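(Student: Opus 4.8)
The plan is to reduce the segmental conditions to familiar nodal ones by exploiting that all segments share the left endpoint $\alpha$. Concretely, I would introduce the nodal Lagrange basis $\{\ell_\alpha, \ell_{\beta_1}, \ldots, \ell_{\beta_r}\}$ of $\mathbb{P}_r$ associated with the $r+1$ distinct nodes $\{\alpha, \beta_1, \ldots, \beta_r\}$, and propose the candidate polynomials $\ell_{s_j} := \ell_{\beta_j}'$. Since each $\ell_{\beta_j}$ has degree exactly $r$, its derivative lies in $\mathbb{P}_{r-1}$, so the $\ell_{s_j}$ are legitimate elements of the target space.

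First I would verify the cardinal conditions \eqref{eq:Lagrcondition}. Because every segment has the form $s_i = [\alpha, \beta_i]$, the fundamental theorem of calculus turns the integral of a derivative into a difference of boundary values,
\[ \int_{\alpha}^{\beta_i} \ell_{\beta_j}'(x)\,\de x = \ell_{\beta_j}(\beta_i) - \ell_{\beta_j}(\alpha). \]
The shared left endpoint is what makes this telescoping clean, and here the boundary term at $\alpha$ vanishes because $\alpha$ is a node distinct from all $\beta_k$, so $\ell_{\beta_j}(\alpha) = 0$; combined with $\ell_{\beta_j}(\beta_i) = \delta_{ij}$ this yields $\int_{s_i}\ell_{s_j} = \delta_{ij}$ exactly.

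It then remains to upgrade this to a genuine basis statement. Rather than invoking the generating-set discussion of Remark \ref{rmk:variouslagrangebasis}, I would extract linear independence directly from the cardinal conditions: if $\sum_{j} c_j \ell_{s_j} \equiv 0$, applying the functional $p \mapsto \int_{s_i} p$ gives $c_i = \sum_j c_j \delta_{ij} = 0$ for every $i$. Hence the $r$ polynomials $\ell_{s_j}$ are linearly independent in the $r$-dimensional space $\mathbb{P}_{r-1}$ and therefore form a basis; equivalently, the $r$ integration functionals are linearly independent, so $\{s_1, \ldots, s_r\}$ is unisolvent and every datum is matched by $p_{r-1} = \sum_j \mu(f,s_j)\,\ell_{s_j}$.

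I expect the only genuinely delicate point to be the bookkeeping that makes the reduction work: confirming that $\alpha$ may be adjoined as a node of a degree-$r$ nodal problem (so that the derivatives land in the correct space $\mathbb{P}_{r-1}$) and that the boundary contribution at $\alpha$ really drops out. Note that the overlapping structure $s_i \subseteq s_{i+1}$ precludes a direct appeal to Proposition \ref{prop:uniquenessnonoverlapping}, so the antiderivative device, and not a non-overlap argument, is the essential mechanism; once it is in place the remaining steps are routine. As an independent sanity check I would observe that consecutive differences $\mu(f,s_i)-\mu(f,s_{i-1})=\int_{\beta_{i-1}}^{\beta_i} f$, with $\beta_0 := \alpha$, recast the data as integrals over the non-overlapping chain $\{[\beta_{i-1},\beta_i]\}$, which is unisolvent by Proposition \ref{prop:uniquenessnonoverlapping}; since the linear map from the $\mu_i$ to these differences is invertible, this reconfirms unisolvence of the case \ref{itm:three}.
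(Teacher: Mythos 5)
Your proposal is correct and follows essentially the same route as the paper: both introduce the nodal Lagrange basis on the augmented node set $\{\alpha,\beta_1,\ldots,\beta_r\}$ and use the fundamental theorem of calculus together with $\ell_{\beta_j}(\alpha)=0$ to verify the cardinal conditions. The only (harmless) difference is that you deduce the basis property of $\{\ell_{\beta_j}'\}_{j=1}^r$ directly from linear independence via the cardinal conditions, whereas the paper cites Remark \ref{rmk:variouslagrangebasis}; your differencing remark is a nice independent confirmation of unisolvence but is not needed.
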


Note that, similarly as discussed in Section \ref{sect:explCB}, the Lagrange polynomials $\ell_{s_j}(x)$ can be expressed in alternative ways if one includes also the polynomial $\ell_{\alpha}'(x)$ in the description. Nevertheless, the characterization given in Proposition \ref{prop:lagrbasisLB} is particularly simple and allows to compute $\ell_{s_j}(x)$ explicitly using the formula \eqref{eq:explicitformulaLagrDerivative} for the derivatives $\ell_{\beta_j}'(x)$.

\section{Numerical conditioning and the Lebesgue constant} \label{sect:Leb}

\subsection{The Lebesgue constant in the nodal setting}

Given a set $\mathcal{X} = \{\xi_1, \ldots, \xi_r\} \subset I$ of distinct nodes, the Lebesgue constant with respect to the nodal interpolation on $\mathcal{X}$ is defined as
\begin{equation*}% \label{eq:nodleb}
    \Lambda_r(\mathcal{X}) = \sup_{x \in I} \sum_{i=1}^r \vert \ell_{\xi_i} (x) \vert . 
\end{equation*}
Further, if we define the (nodal) interpolation operator $\Pi_r(\mathcal{X}): C(I) \to \mathbb{P}_{r-1}$ as the operator
\begin{align} \label{eq:interpoperatorNodal}
	\Pi_{r}(\mathcal{X}) f (x) = \sum_{i = 1}^r f(\xi_i) \ell_{\xi_i} (x) = p_{r-1}(x),
\end{align} 
that projects a continuous function $f$ onto the interpolating polynomial $p_{r-1}$, we have the identity 
\[ \Lambda_r(\mathcal{X}) = \|\Pi_{r}(\mathcal{X})\|_{\mathrm{op}} = \sup_{\|f\| \leq 1} \|\Pi_{r}(\mathcal{X}) f\|,\]
where $\|\cdot \|_{\mathrm{op}}$ denotes the operator norm of $\Pi_{r}(\mathcal{X})$ with respect to the uniform norm $\|\cdot\|$, see \cite{Trefethen}. The Lebesgue constant describes the propagation of errors in the interpolation process and can therefore be regarded as the numerical conditioning for the solution of the interpolation problem. We extend this concept now for interpolation with regard to averages on segments.

\subsection{The Lebesgue constant for interpolation on segments} \label{sect:Lebconstinterpsegment}

We define the Lebesgue constant associated with polynomial interpolation on a set of segments $\mathcal{S} = \{s_1, \ldots, s_r\}$ as 
\begin{equation} \label{eq:genLeb}
\Lambda_r (\mathcal{S}) \doteq \sup_{-1 \leq a < b \leq 1} \frac{1}{b-a} \sum_{i = 1}^r |s_i | \left\vert \int_a^b \ell_{s_i}(x) \de x  \right\vert = \sup_{x \in I} \sum_{i = 1}^r |s_i | |\ell_{s_i}(x)|. 
\end{equation}
Both representations in \eqref{eq:genLeb} will be helpful for us. As in the nodal setting, it is natural to consider the linear interpolation operator $\Pi_{r} \doteq \Pi_{r}(\mathcal{S})$, now given as
\begin{align} \label{eq:interpoperatorexpl}
	\Pi_{r}(\mathcal{S}): \quad L_{\infty}(I) &\to \mathbb{P}_{r-1}, \quad 
	f \mapsto \sum_{i=1}^r \mu(f,s_i) \ell_{s_i}(x) = p_{r-1},
\end{align} 
that maps a bounded function $f$ to its interpolation polynomial $p_{r-1}$. 
For interpolation on general segments, the Lebesgue constant \eqref{eq:genLeb} gives an upper bound for the operator norm of the interpolation operator $\Pi_{r}(\mathcal{S})$ in \eqref{eq:interpoperatorexpl}. Equality does only hold in particular cases. 

\begin{theorem} \label{thm:operatornorm}
    Let $\mathcal{S} = \{s_1, \ldots, s_r\}$ be unisolvent for $\mathbb{P}_{r-1}$ with $ \Pi_{r}(\mathcal{S}): L_{\infty}(I) \to \mathbb{P}_{r-1}$
    being the corresponding interpolation operator. Then
    \begin{equation} \label{eq:upperboundnorm}
    \Vert \Pi_{r}(\mathcal{S}) \Vert_{\mathrm{op}} \leq \Lambda_r (\mathcal{S}).
    \end{equation}     
    If the segments in $\mathcal{S}$ are pairwise non-overlapping in the sense that $|s_i\cap s_j| = 0$ for $i \neq j$, then equality
    $ \Vert \Pi_{r}(\mathcal{S}) \Vert_{\mathrm{op}} = \Lambda_r (\mathcal{S})$ holds in \eqref{eq:upperboundnorm}.
\end{theorem}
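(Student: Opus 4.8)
The plan is to prove the two statements separately: the general upper bound \eqref{eq:upperboundnorm}, which holds for every unisolvent $\mathcal{S}$, and the reverse inequality under the additional non-overlapping hypothesis.

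First I would establish the upper bound by a direct estimate on the interpolation operator \eqref{eq:interpoperatorexpl}. Fix $f \in L_\infty(I)$ with $\|f\| \leq 1$. For each segment $s_i = [\alpha_i, \beta_i]$, the defining property of the uniform norm (equivalently, formula \eqref{eq:charuniformnorm} applied with $a = \alpha_i$ and $b = \beta_i$) gives the data bound
\[|\mu(f,s_i)| = \left| \int_{\alpha_i}^{\beta_i} f(x)\, \de x \right| \leq \|f\|\, |s_i| \leq |s_i|.\]
Inserting this into the expansion $\Pi_r(\mathcal{S}) f = \sum_{i=1}^r \mu(f,s_i) \ell_{s_i}$ and using the second representation of $\Lambda_r(\mathcal{S})$ in \eqref{eq:genLeb} yields, for every $x \in I$,
\[|\Pi_r(\mathcal{S}) f(x)| \leq \sum_{i=1}^r |\mu(f,s_i)|\, |\ell_{s_i}(x)| \leq \sum_{i=1}^r |s_i|\, |\ell_{s_i}(x)| \leq \Lambda_r(\mathcal{S}).\]
Taking the supremum first over $x$ and then over all $\|f\| \leq 1$ delivers \eqref{eq:upperboundnorm}.

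For the equality in the non-overlapping case, the strategy is to exhibit an admissible $f$ that saturates the bound. Since the map $x \mapsto \sum_{i=1}^r |s_i|\,|\ell_{s_i}(x)|$ is continuous on the compact interval $I$, the supremum $\Lambda_r(\mathcal{S})$ is attained at some point $x^* \in I$. Setting $\sigma_i = \operatorname{sign}(\ell_{s_i}(x^*)) \in \{-1, 0, 1\}$, I would define the candidate extremizer by $f(x) = \sigma_i$ for $x \in s_i$ and $f(x) = 0$ otherwise. The crucial role of the hypothesis $|s_i \cap s_j| = 0$ for $i \neq j$ is precisely that it makes this piecewise assignment consistent almost everywhere, so that $f$ is a well-defined element of $L_\infty(I)$ with $\|f\| \leq 1$ and with the exact data $\mu(f, s_i) = \int_{s_i} \sigma_i\, \de x = \sigma_i\, |s_i|$. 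Evaluating the interpolant at $x^*$ then gives, termwise, $\sigma_i |s_i| \ell_{s_i}(x^*) = |s_i|\,|\ell_{s_i}(x^*)|$, whence $\Pi_r(\mathcal{S}) f(x^*) = \sum_{i=1}^r |s_i|\, |\ell_{s_i}(x^*)| = \Lambda_r(\mathcal{S})$, so that $\|\Pi_r(\mathcal{S})\|_{\mathrm{op}} \geq \|\Pi_r(\mathcal{S}) f\| \geq \Lambda_r(\mathcal{S})$. Combined with \eqref{eq:upperboundnorm}, this produces the claimed equality.

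The only genuine obstacle is the construction of $f$ in the second part, and this is exactly where non-overlap is indispensable. To force $\mu(f, s_i) = \sigma_i |s_i|$ simultaneously for all $i$, one wants $f$ to equal the prescribed sign $\sigma_i$ throughout $s_i$, and a single $L_\infty$ function of norm $\leq 1$ can accomplish this only when the segments do not genuinely overlap. If two segments shared a set of positive measure while their target signs differed, no admissible $f$ could meet both constraints tightly; this is the structural reason why equality is asserted under the stated hypothesis rather than for arbitrary unisolvent $\mathcal{S}$.
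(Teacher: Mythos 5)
Your proof is correct, and the first half (the upper bound via $|\mu(f,s_i)|\leq \|f\|\,|s_i|$) is exactly the paper's argument. The second half follows the same underlying idea as the paper --- choose signs $\sigma_i=\operatorname{sgn}(\ell_{s_i}(x^*))$ and build a test function carrying those signs on the respective segments --- but you execute it more directly: since $\Pi_r(\mathcal{S})$ is defined on $L_\infty(I)$, you may plug in the discontinuous step function $f=\sigma_i$ on $s_i$ (well defined a.e.\ precisely because $|s_i\cap s_j|=0$) and attain $\Lambda_r(\mathcal{S})$ exactly at the maximizer $x^*$ of the continuous function $x\mapsto\sum_i|s_i|\,|\ell_{s_i}(x)|$. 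The paper instead works with a family $B_\varepsilon(I)$ of \emph{continuous} functions equal to $\pm 1$ on large open subsets $U_{s_i}^\varepsilon\subset s_i$ with constant sign on each $s_i$, obtains $\Vert\Pi_r f_\varepsilon\Vert\geq(1-\varepsilon/\Lambda_r)\Lambda_r$, and lets $\varepsilon\to 0$. Your route is shorter and loses nothing for the theorem as stated; the paper's $\varepsilon$-regularization buys the additional information that the supremum is already approached within $C(I)$, i.e.\ the equality $\Vert\Pi_r(\mathcal{S})\Vert_{\mathrm{op}}=\Lambda_r(\mathcal{S})$ persists if one restricts the interpolation operator to continuous functions, which is the natural setting when comparing with nodal interpolation. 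Your closing remark on why overlap obstructs the construction is consistent with the paper's remark that for overlapping segments only the upper bound holds in general.
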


\begin{proof}
We first show $ \Vert \Pi_r \Vert_{\mathrm{op}} \leq \Lambda_r \doteq\Lambda_r(\mathcal{S})$. Untangling the given definitions, we find
	\begin{align*}
\Vert \Pi_r \Vert_{\mathrm{op}} & \doteq \sup_{\Vert f \Vert = 1} \Vert \Pi_r f \Vert = \sup_{\Vert f \Vert = 1} \sup_{x \in I} \left\vert \Pi_r f(x)  \right\vert 
		 = \sup_{\Vert f \Vert = 1} \sup_{x \in I}  \left\vert \left( \sum_{i=1}^r \mu(f,s_i) \ell_{s_i}(x) \right) \right\vert \\
		   &= \sup_{\Vert f \Vert = 1} \sup_{x \in I}  \left\vert \sum_{i = 1}^r |s_i| \frac{\mu(f,s_i)}{|s_i|} \ell_{s_i}(x)  \right\vert. 
	\end{align*}
	Since $ \frac{1}{|s_i|} |\mu(f,s_i)| = \frac{1}{|s_i|}| \int_{s_i} f(x) \de x | \leq \Vert f \Vert $, 
	we get
	\begin{align*}
		\Vert \Pi_r \Vert_{\mathrm{op}}
		& \leq \sup_{\Vert f \Vert = 1} \sup_{x \in I} \Vert f \Vert  \sum_{i = 1}^r |s_i| \left\vert  \ell_{s_i}(x)  \right\vert 
		  = \sup_{x \in I} \sum_{i = 1}^r |s_i| \left\vert  \ell_{s_i}(x)  \right\vert = \Lambda_r  .
	\end{align*}
	We have hence proved \eqref{eq:upperboundnorm}. To prove the converse for non-overlapping segments $s_i$, we show that for each fixed $ 0 < \varepsilon < \Lambda_r $ there exists a continuous function $f_{\varepsilon}$ such that $ \Vert f_{\varepsilon}\Vert = 1 $ and 
	$$ \Vert \Pi_r f_{\varepsilon} \Vert \geq \Lambda_r - \varepsilon .$$  Letting $ \varepsilon \to 0 $ the claim will then follow.
	For each $ \varepsilon > 0 $, we consider for every $s_i$ an open subset $ U_{s_i}^{\varepsilon} $  such that $ | U_{s_i}^\varepsilon | \geq \left(1 - \frac{\varepsilon}{\Lambda_r} \right) |s_i| $.
	As the intervals $s_i$ are non-overlapping, we can define the subsets
	\begin{equation} \label{eq:defomegaell}
		B_{\varepsilon} (I) \doteq \left\{ f \in C(I) \ :  \ \Vert f \Vert = 1, \
		f \bigl|_{U_{s_i}^\varepsilon} = \pm 1,
		\ \mathrm{sgn} (f) \text{ is constant in $s_i$ for } i \in \{1, \ldots, r\} 
		\right\}.
	\end{equation}
	For every function $ f \in B_{\varepsilon} (I) $, we have
	\begin{equation} \label{eq:intomegaell}
		|\mu(f,s_i)| = \left\vert \int_{U_{s_i}^\varepsilon} f \de x   + \int_{{s_i} \setminus U_{s_i}^\varepsilon} f \de x  \right \vert = \left\vert \int_{U_{s_i}^\varepsilon} f \de x  \right\vert + \left\vert  \int_{{s_i} \setminus U_{s_i}^\varepsilon} f \de x  \right \vert = |U_{s_i}^\varepsilon| + \left\vert\int_{{s_i} \setminus U_{s_i}^\varepsilon} f \de x \right \vert \geq | U_{s_i}^\varepsilon | ,
	\end{equation}
	where the second equality is granted as we ask that $ \mathrm{sgn} (f) $ is constant in $ s_i $. We further have
	\begin{equation} \label{eq:minorazione}
		\Vert \Pi_r \Vert_{\mathrm{op}} \geq \sup_{f \in B_{\varepsilon} (I)} \Vert \Pi_r f \Vert.
	\end{equation}
	We next expand $ \Vert \Pi_r f \Vert $ as
	\begin{align*}
		\Vert \Pi_r \omega \Vert & = \sup_{x \in I} \left\vert \Pi_r f(x) \right\vert = \sup_{x \in I} \left\vert \sum_{i= 1}^r \mu(f,s_i) \ell_{s_i}(x)  \right\vert = \sup_{x \in I} \left\vert \sum_{i= 1}^r \mu(f,s_i) \, \mathrm{sgn} (\ell_{s_i}(x)) \left\vert \ell_{s_i}(x)  \right\vert\right\vert .
	\end{align*}
	By the definition of $B_{\varepsilon} (I)$, for each $ x \in I $ there exists $ f_\varepsilon \in B_{\varepsilon} (I) $ such that $ \mathrm{sgn} (\ell_{s_i}(x)) \mu(f_\varepsilon,s_i) \geq 0 $ for all $i \in \{1, \ldots, r\}$. Hence 
	$$  \mathrm{sgn} (\ell_{s_i}(x)) \mu(f_\varepsilon,s_i) = |\mu(f_\varepsilon,s_i)|, \quad i \in \{1, \ldots, r\}. $$
	For such $f_\varepsilon \in B_{\varepsilon} (I)$, the right hand side of \eqref{eq:minorazione} can be bounded from below by applying \eqref{eq:intomegaell} as
	\begin{align*}
		\Vert \Pi_r f_\varepsilon \Vert & = \sup_{x \in I}  \left\vert \sum_{i= 1}^r |\mu(f,s_i)| \, \left\vert \ell_{s_i}(x)  \right\vert\right\vert  
		  \geq \sup_{x \in I}  \left\vert \sum_{i= 1}^r |U_{s_i}^\varepsilon| \, \left\vert \ell_{s_i}(x)  \right\vert\right\vert  \\
	    & \geq \sup_{x \in I}  \left\vert \sum_{i= 1}^r \left(1- \frac{\varepsilon}{\Lambda_r}\right) |s_i| \, \left\vert \ell_{s_i}(x)  \right\vert\right\vert   = \left(1- \frac{\varepsilon}{\Lambda_r}\right) \sup_{x \in I} \left\vert \sum_{i= 1}^r  |s_i| \, \left\vert \ell_{s_i}(x)  \right\vert\right\vert  = \left(1- \frac{\varepsilon}{\Lambda_r}\right) \Lambda_r.
	\end{align*}
	Hence we have $ \Vert \Pi_r \Vert_{\mathrm{op}} \geq \Lambda_r - \varepsilon $. This, together with \eqref{eq:upperboundnorm}, implies that $ \Vert \Pi_r \Vert_{\mathrm{op}} = \Lambda_r $.
\end{proof}

\begin{remark}
For overlapping segments, we have in general only the upper estimate \eqref{eq:upperboundnorm}. In fact, in some cases the Lebesgue constant $\Lambda_r(\mathcal{S})$ can overestimate the operator norm $\Vert \Pi_r(\mathcal{S}) \Vert_{\mathrm{op}}$ quite considerably. In Fig. \ref{fig:LebConstC3}, the differences between the two values are visualized for the overlapping Chebyshev-Lobatto segments $\mathcal{S}^{\mathrm{CLO}}$ in the class \ref{itm:three}. While the operator norm $\Vert \Pi_r(\mathcal{S}^{\mathrm{CLO}}) \Vert_{\mathrm{op}}$ grows logarithmically in $r$ the growth of the constant $\Lambda_r(\mathcal{S}^{\mathrm{CLO}})$ is considerably faster. 

\end{remark}

\begin{figure}[htbp]
    \centering
    \includegraphics[width=7.5cm]{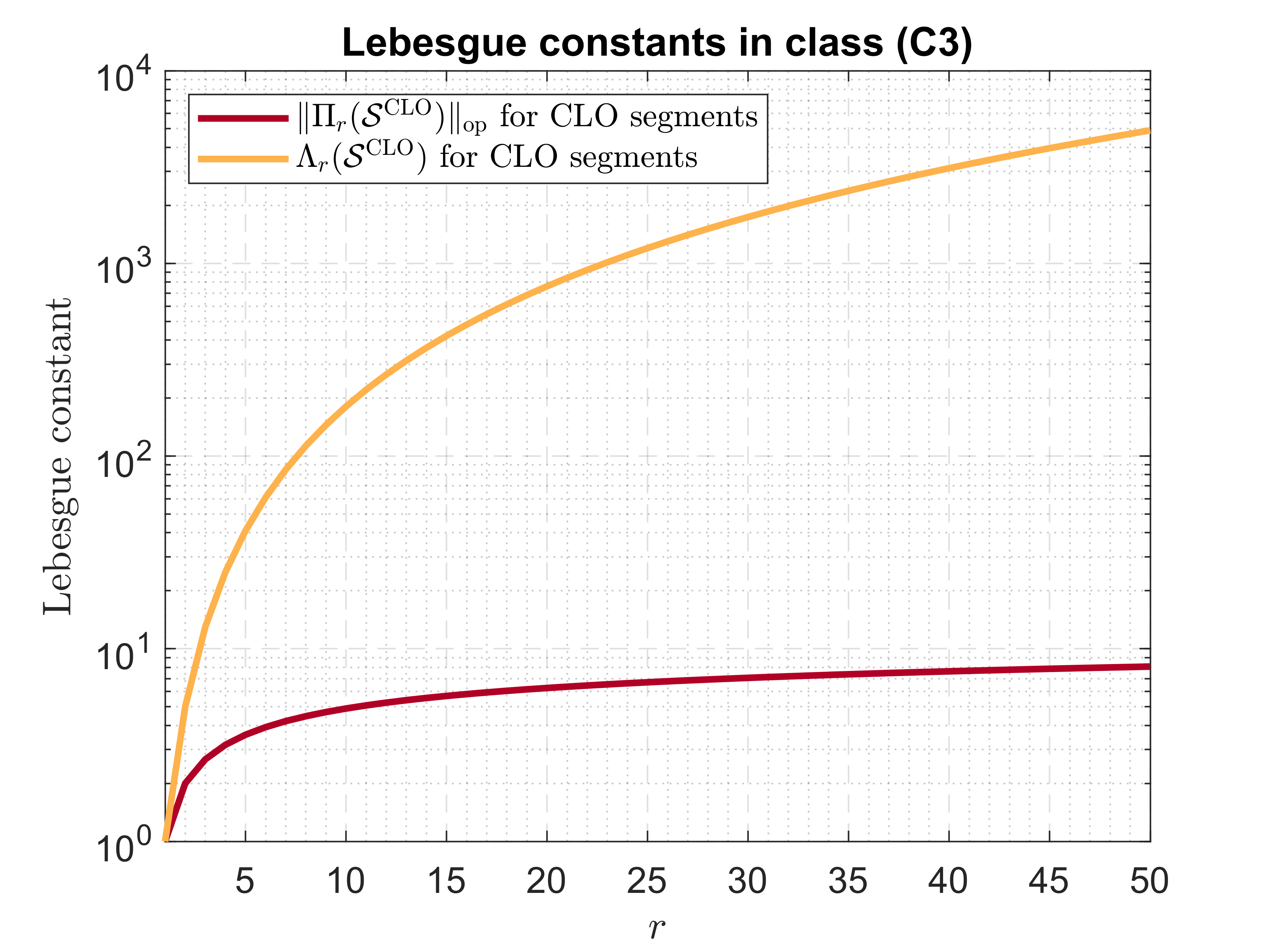}
    \vspace{-.4cm}
    \caption{Difference between the Lebesgue constant $\Lambda_r(\mathcal{S}^{\mathrm{CLO}})$ and the operator norm $\Vert \Pi_r (\mathcal{S}^{\mathrm{CLO}})\Vert_{\mathrm{op}}$ for the overlapping Chebyshev-Lobatto segments $\mathcal{S}^{\mathrm{CLO}}$ in the class \ref{itm:three}.  }
    \label{fig:LebConstC3}
\end{figure}

\subsection{Invariance of the Lebesgue constants}

A charming aspect of the nodal Lebesgue constant is that it depends only on the position of the nodes inside an interval $ I $ and not on the interval $ I $ itself \cite{Hestaven}. This feature is borne by the segmental Lebesgue constant \eqref{eq:genLeb} as well.

\begin{proposition} \label{prop:invarianceLeb}
	The segmental Lebesgue constant does not depend on the interval $ I $.
\end{proposition}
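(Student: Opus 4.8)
The plan is to exploit invariance under affine bijections between intervals, exactly as in the classical nodal argument referenced in \cite{Hestaven}. Let $\phi: I \to \tilde{I}$ be the orientation-preserving affine bijection between $I = [-1,1]$ and an arbitrary interval $\tilde{I} = [c,d]$, so that $\phi(x) = \lambda(x+1)+c$ with constant derivative $\lambda = \phi'(x) = (d-c)/2 > 0$. Under $\phi$ each segment $s_i = [\alpha_i,\beta_i]$ is carried to $\tilde{s}_i = [\phi(\alpha_i),\phi(\beta_i)]$, and a change of variables yields the length relation $|\tilde{s}_i| = \lambda\,|s_i|$.

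First I would determine how the cardinal polynomials transform. Writing $\tilde{\ell}_{\tilde{s}_i}$ for the Lagrange basis on $\tilde{I}$ associated to the $\tilde{s}_i$, the defining conditions $\int_{\tilde{s}_j} \tilde{\ell}_{\tilde{s}_i}(y)\,\de y = \delta_{ij}$ can be pulled back via $y = \phi(x)$, $\de y = \lambda\,\de x$, to read $\int_{s_j} \lambda\,\tilde{\ell}_{\tilde{s}_i}(\phi(x))\,\de x = \delta_{ij}$. Since $\lambda\,(\tilde{\ell}_{\tilde{s}_i}\circ\phi)$ is a polynomial of degree $r-1$ satisfying the Lagrange conditions \eqref{eq:Lagrcondition} on $I$, uniqueness of the cardinal basis forces the identity $\tilde{\ell}_{\tilde{s}_i}(\phi(x)) = \lambda^{-1}\,\ell_{s_i}(x)$.

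The heart of the argument is then a direct substitution in the second representation of the Lebesgue constant in \eqref{eq:genLeb}. As $\phi$ is a bijection, the supremum over $y \in \tilde{I}$ coincides with the supremum over $x \in I$ through $y = \phi(x)$, so that
\[ \Lambda_r(\tilde{\mathcal{S}}) = \sup_{y \in \tilde{I}} \sum_{i=1}^r |\tilde{s}_i|\,|\tilde{\ell}_{\tilde{s}_i}(y)| = \sup_{x \in I} \sum_{i=1}^r (\lambda |s_i|)\,(\lambda^{-1}|\ell_{s_i}(x)|) = \sup_{x \in I} \sum_{i=1}^r |s_i|\,|\ell_{s_i}(x)| = \Lambda_r(\mathcal{S}). \]
The two scaling factors cancel exactly, which is the crux of the matter: the length weight $|s_i|$ appearing in \eqref{eq:genLeb} is precisely what compensates the $\lambda^{-1}$ coming from the rescaled cardinal polynomial.

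I do not expect a genuine obstacle here; the only point requiring a little care is the transformation rule for the cardinal polynomials, where one must invoke uniqueness of the Lagrange basis to identify $\lambda\,(\tilde{\ell}_{\tilde{s}_i}\circ\phi)$ with $\ell_{s_i}$. I would also note that the same cancellation can be read off the first (integral) representation in \eqref{eq:genLeb}: under $\phi$ the factor $(\tilde{b}-\tilde{a})^{-1} = \lambda^{-1}(b-a)^{-1}$ supplies the $\lambda^{-1}$ that neutralises the length weight $\lambda\,|s_i|$, while the inner integral $\int_{\tilde a}^{\tilde b}\tilde{\ell}_{\tilde{s}_i}(y)\,\de y$ is itself invariant because the Jacobian $\lambda$ absorbs the $\lambda^{-1}$ in $\tilde{\ell}_{\tilde{s}_i}\circ\phi$. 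Either route gives $\Lambda_r(\tilde{\mathcal{S}}) = \Lambda_r(\mathcal{S})$, so the segmental Lebesgue constant is independent of the ambient interval.
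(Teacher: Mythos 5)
Your proposal is correct and follows essentially the same route as the paper: transport the cardinal basis under the orientation-preserving affinity, note that $\tilde{\ell}_{\tilde{s}_i} = \lambda^{-1}\,\ell_{s_i}\circ\phi^{-1}$ (the paper writes the factor as $|s_i|/|\varphi(s_i)|$, which is the same thing), and observe that this factor cancels against the length weights in \eqref{eq:genLeb}. Your explicit justification of the transformation rule via uniqueness of the Lagrange basis is a welcome detail that the paper leaves implicit.
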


\begin{proof}
Let $ \varphi: I \to I' $ be the orientation-preserving affinity that maps $ I $ onto $I'$. By the change of variable, if $ \ell_{s_i}^I $ is the $i$-th element of this generalised Lagrange basis for $ I $, then $ \ell_{s_i}^{I'} = \frac{|s_i|}{|\varphi(s_i)|} \ell_{s_i}^I \circ \varphi^{-1} $ is the $i$-th element of the generalised Lagrange basis for $ I' $. A direct computation thus yields
		\begin{align*}
		\Lambda^{I'} (\mathcal{S}) & = \max_{y \in I'} \sum_i |\varphi(s_i) | \left\vert \ell_{s_i}^{I'} (y) \right\vert = \max_{y \in I'} \sum_i |\varphi(s_i) | \frac{|s_i|}{|\varphi(s_i)|} \left\vert \ell_{s_i}^I \circ \varphi^{-1} (y) \right\vert \\& = \max_{\varphi(x) \in I'} \sum_i |s_i | \left\vert \ell_{s_i}^I \circ \varphi^{-1} (\varphi(x)) \right\vert = \max_{x \in I} \sum_i |s_i | \left\vert \ell_{s_i}^I (x) \right\vert = \Lambda^{I} (\mathcal{S}) .
	\end{align*}
\end{proof}

Proposition \ref{prop:invarianceLeb} simplifies, in this context, results of \cite[Chapter $3$]{BruniThesis}.

\subsection{The Lebesgue constant: lower bounds and convergence}
The Lebesgue constant is of additional interest in polynomial approximation as it plays an important role in convergence analysis. For an increasing family $\{\mathcal{S}_r\}_{r \in \N}$ of segments with $|\mathcal{S}_r| = r$, it can be used to estimate the convergence of the interpolant $\Pi_r(\mathcal{S}_r) f$ towards the function $f$. As in the nodal setting, the convergence for all continuous $f \in C(I)$ can in general not be guaranteed. To see this, we note that $\Pi_r(\mathcal{S}_r)$ is a linear projection from $C(I)$ into  $\mathbb{P}_{r-1}$ such that $\Pi_r(\mathcal{S}_r) p (x) = p(x)$ for all $p \in \mathbb{P}_{r-1}$. For general projection operators, and thus also for $\Pi_r(\mathcal{S}_r)$, we know that (see \cite[p. 214]{Cheney66}) 
\begin{equation} \label{eq:atleastloggrowth}
\Lambda_r(\mathcal{S}_r) \geq \Vert \Pi_r(\mathcal{S}_r)\Vert_{\mathrm{op}} > \frac{1}{2} \left( \frac{4}{\pi^2} \ln r - 1 \right).    
\end{equation}
Thus, by contradiction, the Banach-Steinhaus theorem implies that $\lim_{r \to \infty} \Pi_r(\mathcal{S}_r) f = f$ is impossible for all $f \in C(I)$. To guarantee convergence, stronger assumptions on $f$ are necessary. Introducing the modulus of continuity of $f$ as 
$\omega(f,\delta) \doteq \sup_{|x - y| \leq \delta} |f(x) - f(y)|$, we have the following result.

\begin{proposition} \label{prop:convergenceDL}
For given $\mathcal{S} = \{s_1, \ldots, s_r\}$, let $\displaystyle h = \sup_{x \in I} \min_{1 \leq i \leq r} \sup_{y_i \in s_i} |x - y_i|$. Then,
\[\Vert f - \Pi_r(\mathcal{S}) f \Vert \leq \Lambda_r(\mathcal{S}) \, \omega(f,h). \]
\end{proposition}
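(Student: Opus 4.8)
The plan is to argue pointwise in $x$ and then pass to a supremum, the whole argument resting on the fact that $\Pi_r(\mathcal{S})$ reproduces constants. First I would record the partition-of-unity identity
\begin{equation*}
\sum_{i=1}^r |s_i|\,\ell_{s_i}(x) = 1, \qquad x \in I,
\end{equation*}
which holds because the constant function $1$ lies in $\mathbb{P}_{r-1}$ and is therefore fixed by the projection $\Pi_r(\mathcal{S})$, combined with $\mu(1,s_i) = |s_i|$. Writing $f(x)$ as $f(x)\sum_i |s_i|\ell_{s_i}(x)$ and subtracting the expansion \eqref{eq:interpLagrbasisSegmental} of $\Pi_r(\mathcal{S})f$, I obtain the error representation
\begin{equation*}
f(x) - \Pi_r(\mathcal{S})f(x) = \sum_{i=1}^r \bigl(f(x)\,|s_i| - \mu(f,s_i)\bigr)\ell_{s_i}(x) = \sum_{i=1}^r \left(\int_{s_i}\bigl(f(x)-f(t)\bigr)\de t\right)\ell_{s_i}(x),
\end{equation*}
which reduces everything to controlling the segment-wise differences $\int_{s_i}(f(x)-f(t))\,\de t$.

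The second step is to bound each inner integral by the modulus of continuity. Since $|f(x)-f(t)| \leq \omega(f,|x-t|)$ and $\omega(f,\cdot)$ is nondecreasing, for $t \in s_i$ one has $|f(x)-f(t)| \leq \omega\bigl(f,\sup_{y\in s_i}|x-y|\bigr)$, so that $\bigl|\int_{s_i}(f(x)-f(t))\,\de t\bigr| \leq |s_i|\,\omega\bigl(f,\sup_{y\in s_i}|x-y|\bigr)$. The triangle inequality then yields
\begin{equation*}
\bigl| f(x) - \Pi_r(\mathcal{S})f(x)\bigr| \leq \sum_{i=1}^r |s_i|\,\omega\Bigl(f,\ \sup_{y\in s_i}|x-y|\Bigr)\,|\ell_{s_i}(x)|.
\end{equation*}
At this point the geometric quantity $h = \sup_{x}\min_i \sup_{y\in s_i}|x-y|$ is meant to enter: for every $x$ there is a segment all of whose points lie within distance $h$ of $x$, and one wants to use this, together with the monotonicity of $\omega$, to replace the argument of $\omega$ by the single scale $h$ and pull $\omega(f,h)$ out of the sum. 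What remains is then $\sum_i |s_i|\,|\ell_{s_i}(x)|$, whose supremum over $x\in I$ is exactly $\Lambda_r(\mathcal{S})$ by the second representation in \eqref{eq:genLeb}; taking the supremum over $x$ on the left concludes with $\Vert f - \Pi_r(\mathcal{S})f\Vert \leq \Lambda_r(\mathcal{S})\,\omega(f,h)$.

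The step I expect to be the genuine obstacle is precisely this collapsing of the segment-dependent arguments of $\omega$ down to $h$. The error representation uses only reproduction of constants, so a priori the $i$-th term carries $\omega$ evaluated at that segment's own worst-case distance $\sup_{y\in s_i}|x-y|$ to $x$, and these distances are not uniformly bounded by $h$ — the defining minimum in $h$ guarantees only one favourably placed segment per point $x$, not all of them. Closing the estimate at the claimed scale $h$ therefore requires more than the constant-reproduction identity above: one must either exploit the full reproduction of $\mathbb{P}_{r-1}$ (e.g.\ through a Lebesgue-lemma comparison $f - \Pi_r(\mathcal{S})f = (f-p) - \Pi_r(\mathcal{S})(f-p)$ with a well-chosen $p \in \mathbb{P}_{r-1}$ and \eqref{eq:upperboundnorm}), or carry out a localization ensuring that the contributions of segments far from $x$ are absorbed, so that the effective argument of $\omega$ is indeed of order $h$. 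Handling this far-segment contribution is the part of the argument that I would expect to demand the most care.
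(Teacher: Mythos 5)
Your decomposition is exactly the one the paper uses: reproduction of constants gives $\sum_{i}|s_i|\,\ell_{s_i}(x)=1$, hence $f(x)-\Pi_r(\mathcal{S})f(x)=\sum_{i}\bigl(\int_{s_i}(f(x)-f(y))\,\de y\bigr)\ell_{s_i}(x)$, and the modulus of continuity together with the second representation of $\Lambda_r(\mathcal{S})$ in \eqref{eq:genLeb} is meant to finish the estimate. The only difference is that where you stop, the paper does not: it asserts in one line that $\bigl|\int_{s_i}(f(x)-f(y))\,\de y\bigr|\le |s_i|\,\omega(f,h)$ for \emph{every} $i$, ``by the definition of $\omega$ and of $h$''. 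So your proposal is incomplete precisely at the step the paper settles by assertion.

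You are right to be suspicious of that step. With $h=\sup_{x}\min_{i}\sup_{y\in s_i}|x-y|$, the definition guarantees, for each $x$, only \emph{one} segment all of whose points lie within distance $h$ of $x$; for the remaining segments $\sup_{y\in s_i}|x-y|$ can greatly exceed $h$ (for a chain covering $I$ it is of the order of the diameter of $I$ when $s_i$ is far from $x$), and nothing in the error representation suppresses those contributions, since $|\ell_{s_i}(x)|$ need not be small for distant segments. The one-line bound is literally valid only if $h$ is read with $\max_{i}$ in place of $\min_{i}$ --- in which case $h$ is essentially the diameter of the union of the segments and the statement is much weaker --- or, equivalently, one only obtains the estimate with $\omega\bigl(f,\sup_{x}\max_{i}\sup_{y\in s_i}|x-y|\bigr)$ in place of $\omega(f,h)$. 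To reach the fill-distance scale $h$ one does need one of the repairs you sketch: a Lebesgue-lemma comparison $\Vert f-\Pi_r(\mathcal{S})f\Vert\le(1+\Vert\Pi_r(\mathcal{S})\Vert_{\mathrm{op}})\,\mathrm{dist}(f,\mathbb{P}_{r-1})$ combined with a Jackson-type estimate (at the cost of an absolute constant and with $\Lambda_r$ replaced by $1+\Vert\Pi_r\Vert_{\mathrm{op}}$), or a genuine localization argument controlling the far segments. In short: your proof is not finished, but the missing step is not an oversight on your part --- it is the step the paper's own proof does not justify under the stated definition of $h$.
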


\begin{proof}
Using the Lagrange basis and the fact that constant functions are reproduced, we get
\[f(x) - \Pi_r(\mathcal{S}) f(x) = \sum_{i=1}^r \left( \int_{s_i} (f(x) - f(y)) \de y \right) \ell_{s_i}(x).\]
By the definition of the modulus $\omega(f,\delta)$ and the value $h$, we therefore get for every $x \in I$:
\[ \left| f(x) - \Pi_r(\mathcal{S}) f(x) \right| \leq \sum_{i=1}^r \omega(f,h) |s_i| |\ell_{s_i}(x)| = \Lambda_r(\mathcal{S}) \omega(f,h).\]
\end{proof}
This result implies that the convergence of $\Pi_r(\mathcal{S}_r) f$ towards $f$ certainly depends on the choice of the segments $\mathcal{S}_r$ (via the Lebesgue constant $\Lambda_r(\mathcal{S}_r)$ and the value $h$), but also on the regularity of $f$ in the sense that $\Lambda_r(\mathcal{S}) \omega(f,h)$ has to converge towards $0$ in order to obtain convergence.

\section{Estimates of the Lebesgue constants} \label{sect:EstLeb}

In this section we aim at estimating the Lebesgue constants $\Lambda_r(\mathcal{S})$ for different families of segments in the classes \ref{itm:one}, \ref{itm:two} and \ref{itm:three}. 

\subsection{The class \ref{itm:one}: estimates for equidistant segments} 

We start with a result that relates the nodal Lebesgue constant for the endpoints $\mathcal{X} = \{\xi_0, \xi_1, \ldots, \xi_r\}$ with the corresponding constant $\Lambda_r(\mathcal{S})$ associated with the $r$ segments $s_i = [\xi_{i-1}, \xi_i]$, $i \in \{1, \ldots r\}$. 

\begin{proposition} \label{prop:upperbound}
	Let $ \Lambda_{r+1} (\mathcal{X}) $ be the Lebesgue constant associated with a set $\mathcal{X}$ of nodes of the form $ a = \xi_0 < \xi_1 < \cdots < \xi_r = b $ and $ \Lambda_r (\mathcal{S})$ the generalized Lebesgue constant associated with the segments $s_i = [\xi_{i-1}, \xi_i]$ constructed over the same nodes. Then, one has
	$$ \Lambda_{r} (\mathcal{S}) \leq \frac{2}{b-a} \left( \max_{1 \leq i \leq r }\!|s_i|\right) \, r^3 \Lambda_{r+1} (\mathcal{X}) .$$
\end{proposition}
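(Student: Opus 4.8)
The plan is to start from the pointwise representation of the segmental Lebesgue constant in \eqref{eq:genLeb}, namely $\Lambda_r(\mathcal{S}) = \sup_{x \in [a,b]} \sum_{i=1}^r |s_i|\,|\ell_{s_i}(x)|$, and to insert the explicit cardinal basis from Proposition \ref{prop:lagrbasis}, $\ell_{s_i}(x) = \sum_{k=i}^r \ell_{\xi_k}'(x)$. Applying the triangle inequality and bounding $|s_i| \le \max_{1\le j\le r}|s_j|$ gives
\begin{align*}
\Lambda_r(\mathcal{S}) &\le \Bigl(\max_{1\le j\le r}|s_j|\Bigr) \sup_{x\in[a,b]} \sum_{i=1}^r \sum_{k=i}^r |\ell_{\xi_k}'(x)| = \Bigl(\max_{1\le j\le r}|s_j|\Bigr) \sup_{x\in[a,b]} \sum_{k=1}^r k\,|\ell_{\xi_k}'(x)| \\
&\le \Bigl(\max_{1\le j\le r}|s_j|\Bigr)\, r \sup_{x\in[a,b]} \sum_{k=0}^r |\ell_{\xi_k}'(x)|,
\end{align*}
where the middle equality comes from exchanging the order of summation (the term $|\ell_{\xi_k}'(x)|$ is counted once for each $i\le k$), and the last step uses $k\le r$. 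This reduces the problem to estimating the ``derivative Lebesgue sum'' $\sup_x \sum_{k=0}^r |\ell_{\xi_k}'(x)|$ in terms of the nodal Lebesgue constant $\Lambda_{r+1}(\mathcal{X}) = \sup_x \sum_{k=0}^r |\ell_{\xi_k}(x)|$.

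The crux of the argument, and the step I expect to be the main obstacle, is precisely this reduction: the function $\sum_k |\ell_{\xi_k}'(x)|$ is not a polynomial, so Markov's inequality cannot be applied to it directly. I would resolve this with a sign-freezing trick. Fix a point $x^\ast \in [a,b]$ attaining the supremum and set $\sigma_k = \mathrm{sgn}(\ell_{\xi_k}'(x^\ast))$. Then the single polynomial $p = \sum_{k=0}^r \sigma_k \ell_{\xi_k} \in \mathbb{P}_r$ satisfies $p'(x^\ast) = \sum_{k=0}^r |\ell_{\xi_k}'(x^\ast)| = \sup_x \sum_{k=0}^r |\ell_{\xi_k}'(x)|$, while $\|p\| = \sup_x \bigl|\sum_k \sigma_k \ell_{\xi_k}(x)\bigr| \le \sup_x \sum_k |\ell_{\xi_k}(x)| = \Lambda_{r+1}(\mathcal{X})$.

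Finally I would invoke Markov's inequality in the form valid on a general interval $[a,b]$ for polynomials of degree at most $r$, namely $\|p'\| \le \frac{2r^2}{b-a}\|p\|$ (obtained by rescaling the classical estimate $\|p'\|_{[-1,1]}\le r^2\|p\|_{[-1,1]}$). This yields
\[ \sup_{x\in[a,b]} \sum_{k=0}^r |\ell_{\xi_k}'(x)| = p'(x^\ast) \le \|p'\| \le \frac{2r^2}{b-a}\,\Lambda_{r+1}(\mathcal{X}). \]
Combining this with the reduction from the first paragraph gives $\Lambda_r(\mathcal{S}) \le \frac{2}{b-a}\bigl(\max_{1\le i\le r}|s_i|\bigr)\,r^3\,\Lambda_{r+1}(\mathcal{X})$, which is the claimed inequality. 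The degree count ($\ell_{\xi_k}\in\mathbb{P}_r$, giving the $r^2$ from Markov, times the $r$ from the double-sum reorganization) is exactly what produces the $r^3$ factor, so no constants need to be optimized beyond this.
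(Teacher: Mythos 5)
Your proof is correct, and its skeleton coincides with the paper's: bound $|s_i|$ by $h=\max_i|s_i|$, insert the cardinal basis $\ell_{s_i}=\sum_{k\ge i}\ell_{\xi_k}'$ from Proposition \ref{prop:lagrbasis}, pick up a factor $r$ from the double sum, and then convert the derivative sum into the nodal Lebesgue function via the Markov brothers' inequality on $[a,b]$. The one place where you genuinely diverge is the Markov step, and your version is the more careful one. The paper applies Markov ``to all terms in the sum,'' which, read literally, yields $\sup_x\sum_j|\ell_{\xi_j}'(x)|\le\sum_j\sup_x|\ell_{\xi_j}'(x)|\le\frac{2r^2}{b-a}\sum_j\sup_x|\ell_{\xi_j}(x)|$; the quantity $\sum_j\sup_x|\ell_{\xi_j}(x)|$ dominates, rather than is dominated by, $\sup_x\sum_j|\ell_{\xi_j}(x)|=\Lambda_{r+1}(\mathcal{X})$, so the term-by-term route does not directly land on the stated bound. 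Your sign-freezing argument --- fixing $x^\ast$, setting $\sigma_k=\mathrm{sgn}(\ell_{\xi_k}'(x^\ast))$, and applying Markov to the single polynomial $p=\sum_k\sigma_k\ell_{\xi_k}\in\mathbb{P}_r$ with $\|p\|\le\Lambda_{r+1}(\mathcal{X})$ --- closes exactly this gap and is presumably what the paper's remark that $|\ell_{\xi_i}'|$ is ``piecewise polynomial'' is gesturing at. The constant $\frac{2}{b-a}r^3$ comes out identically in both arguments.
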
 

\begin{proof}
Using the definition \eqref{eq:genLeb} of the Lebesgue constant $\Lambda_r (\mathcal{S})$, and denoting $h = \max_{1 \leq i \leq r }\!|s_i|$, we obtain the following first estimate
\begin{equation*} 
\Lambda_r (\mathcal{S}) \leq h \sup_{x \in I} \sum_{j = 1}^r |\ell_{s_j}(x)|. 
\end{equation*}
    Now, plugging in the characterisation \eqref{eq:lagrange1form} of the Lagrange basis in the class \ref{itm:one}, we get
        \begin{align*}
            \Lambda_r (\mathcal{S}) & \leq h \sup_{x \in I} \sum_{j = 1}^r |\ell_{s_j}(x)| = h \sup_{x \in I} \sum_{j = 1}^r \left\vert\sum_{i=j}^{r} \ell_{\xi_i}'(x) \right\vert  \leq h \sup_{x \in I} \sum_{j = 1}^r \sum_{i=j}^{r} \vert \ell_{\xi_i}'(x) \vert \leq h r \sup_{x \in I} \sum_{j = 1}^r \vert \ell_{\xi_j}'(x) \vert.
    \end{align*}
    Since the functions $ \vert \ell_{\xi_i}'(x) \vert $ are piecewise polynomials of degree $r$, we can apply the Markov brothers' inequality (cf. \cite[p. 300]{Achieser92} or \cite[p. 91]{Cheney66}) composed with the affinity $ \Psi: [a,b] \to [-1,1] $ to all terms in the sum on the right hand side and obtain the final estimate
    \begin{equation*}
        \Lambda_r (\mathcal{S}) \leq \frac{2h}{b-a} r^3 \sup_{x \in I} \sum_{j = 0}^r \vert \ell_{\xi_j}(x) \vert = \frac{2h}{b-a} r^3 \Lambda_{r+1} (\mathcal{X}).
    \end{equation*}
    Note that the right hand side may be trivially bounded by $ 2 r^3 \Lambda_{r+1} (\mathcal{X}) $.
\end{proof}

This result can be synthesised as follows: if the Lebesgue constant associated with a collection of segments shows an exponential behaviour, the corresponding nodes are comparably ill-conditioned.

\subsubsection{Uniform segments} A classical results states that the nodal Lebesgue constant $\Lambda_r(\mathcal{X}^{\mathrm{eq}})$ grows exponentially if equidistant points $\mathcal{X}^{\mathrm{eq}}$ are used as interpolation nodes. This can be proved by direct computation \cite{Schoenage} or via the characterisation of the Lebesgue constant as the norm of the interpolation operator \cite{Trefethen}. In the case of segments, i.e. for $ \Lambda_r (\mathcal{S}^{\mathrm{eq}}) $, this has only been observed numerically \cite{BruniEdges, BruniThesis} so far. We prove it now for $ \Lambda_r(\mathcal{S}^{\mathrm{eq}})$ and offer first an exponential lower bound.

\begin{lemma} \label{lem:lowerbound}
    We have
    $$ \Lambda_r (\mathcal{S}^{\mathrm{eq}}) > \frac{1}{\pi} \frac{2^{r-1}}{r^2} .$$
\end{lemma}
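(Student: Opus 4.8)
The plan is to evaluate the segmental Lebesgue sum at a single cleverly chosen point and bound one term from below, ignoring the rest (which is legitimate since all summands $|s_i^{\mathrm{eq}}||\ell_{s_i}(x)|$ are nonnegative). For equidistant segments in the class \ref{itm:one} all the lengths are equal, $|s_i^{\mathrm{eq}}| = 2/r$, so the definition \eqref{eq:genLeb} gives $\Lambda_r(\mathcal{S}^{\mathrm{eq}}) = \frac{2}{r}\sup_{x \in I}\sum_{j=1}^r |\ell_{s_j}(x)| \geq \frac{2}{r}\,|\ell_{s_j}(x_0)|$ for any chosen index $j$ and point $x_0 \in I$. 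Using the explicit representation from Proposition \ref{prop:lagrbasis}, namely $\ell_{s_j} = \sum_{k=j}^r \ell_{\xi_k}'$, together with the logarithmic-derivative formula \eqref{eq:explicitformulaLagrDerivative}, reduces everything to estimating derivatives of nodal Lagrange polynomials on the equidistant grid $\mathcal{X}^{\mathrm{eq}}$.

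The key step is to exploit the well-understood explosive behaviour of equidistant nodal Lagrange polynomials near the endpoints of $I$. The classical source of exponential growth is the factor $\binom{r}{i}$ arising in the denominators $\prod_{k \neq i}(\xi_i^{\mathrm{eq}} - \xi_k^{\mathrm{eq}})$ of \eqref{eq:explicitformulaLagrbasis}; the largest binomial coefficient is of order $2^r/\sqrt{r}$, which is where the target factor $2^{r-1}$ must come from. I would therefore isolate the single Lagrange derivative $\ell_{\xi_k}'$ whose leading coefficient is largest in modulus and evaluate $\ell_{s_j}$ (or a single derivative $\ell_{\xi_k}'$) at the endpoint $x_0 = 1$ (or just inside it), where the nodal Lagrange polynomials attain their extreme values. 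Because the $\xi_k^{\mathrm{eq}}$ are on a uniform grid of spacing $2/r$, the product $\prod_{i\neq k}\frac{1}{\xi_k - \xi_i}$ simplifies to $\left(\frac{r}{2}\right)^r \frac{(-1)^{r-k}}{k!(r-k)!}$, and choosing $k$ to maximize this yields a factor proportional to a central binomial coefficient times $(r/2)^r$.

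I expect the main obstacle to be controlling the competition between the factorially large spacing factor $(r/2)^r$ and the values of the surviving polynomial factor $\prod_{i\neq k}(x_0 - \xi_i)$ at the evaluation point, so that after cancellation exactly a clean $2^{r-1}/r^2$ survives rather than something merely of that order with messy constants. The cleanest route is probably to pick $x_0$ to be a midpoint of the first (or last) segment, where each factor $|x_0 - \xi_i|$ is bounded below by the grid spacing, so that the product $\prod_{i \neq k}|x_0 - \xi_i|$ contributes a controlled factorial that partially cancels the denominator and leaves a central binomial coefficient. One then invokes the standard estimate $\binom{r}{\lfloor r/2\rfloor} > \frac{2^r}{r}$ (or a Stirling-type bound) and absorbs the prefactors into the stated constant $\frac{1}{\pi}$. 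The $\frac{1}{\pi}$ strongly suggests a Wallis/Stirling bound of the form $\binom{r}{\lfloor r/2 \rfloor} \geq \frac{2^r}{\sqrt{\pi r}}$ is used at the decisive step, with the remaining powers of $r$ bookkept through the factors $\frac{2}{r}$ from $|s_i^{\mathrm{eq}}|$ and the grid spacing.
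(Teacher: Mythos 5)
Your opening reduction is sound: since $|s_i^{\mathrm{eq}}| = 2/r$ for every $i$, the second representation in \eqref{eq:genLeb} gives $\Lambda_r(\mathcal{S}^{\mathrm{eq}}) \geq \tfrac{2}{r}\,|\ell_{s_j}(x_0)|$ for any single index $j$ and point $x_0$, and Proposition \ref{prop:lagrbasis} reduces $\ell_{s_j}$ to nodal Lagrange derivatives. The genuine gap is the very next step, where you bound $|\ell_{s_j}(x_0)| = \bigl|\sum_{k=j}^{r}\ell_{\xi_k}'(x_0)\bigr|$ from below by isolating the single term with the largest reciprocal denominator (the central one, $k\approx r/2$). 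Unlike the outer sum over segments, this inner sum is signed: for $x_0$ in the last subinterval the values $\ell_{\xi_k}(x_0)$ have sign $(-1)^{r-k+1}$ and nearly equal central magnitudes $\asymp \binom{r}{k}\binom{2r}{r}4^{-r}/|2(r-k)-1|$, so consecutive terms cancel almost completely, and the same concern afflicts the derivatives. Keeping only the largest term therefore does not yield a lower bound on the modulus of the sum, and this cancellation is precisely where all the difficulty of the lemma sits. The one choice that avoids the sum altogether, $j=r$ (so that $\ell_{s_r}=\ell_{\xi_r}'$), fails for the opposite reason: $|\ell_{\xi_r}|\leq 1$ on all of $I$ for the equispaced grid, so by Markov's inequality $\|\ell_{\xi_r}'\|$ is only polynomially large; the exponential blow-up lives exclusively in the central polynomials evaluated near $\pm 1$. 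To rescue your route you would have to evaluate the full alternating sum explicitly at a chosen $x_0$, a Sch\"onhage-type computation that your plan defers rather than performs. (A smaller inaccuracy: the $1/\pi$ in the statement does not come from a Wallis or Stirling bound.)

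The paper's proof is entirely different and sidesteps these asymptotics. It uses the commutation identity $(\Pi_r(\mathcal{S})f')(x) = (\Pi_{r+1}(\mathcal{X})f)'(x)$, valid for chains of segments, applied to $f(x)=e^{\imath\pi x}$ with $\|f'\|=\pi$ (this is the true origin of the factor $1/\pi$). It then bounds $\|(\Pi_{r+1}(\mathcal{X}^{\mathrm{eq}})f)'\|$ from below by the difference quotient between $0$ and $0.5$ and quotes the Trefethen--Weideman estimate $(\Pi_{r+1}(\mathcal{X}^{\mathrm{eq}})f)(0.5) < -2^{r-2}/r^2$, so the exponential growth is imported from a known nodal result instead of being re-derived from binomial coefficients.
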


\begin{proof} 
It is shown in \cite[Proposition 4.2]{BruniThesis} that in the case \ref{itm:one} of concatenated segments and for a differentiable function $f$ one has the following relation between the interpolation operator $\Pi_r(\mathcal{S})$ in the segmental setting and the interpolation operator $\Pi_{r+1}(\mathcal{X})$ in the nodal setting:
\begin{equation} \label{eq:commutationderivative}
(\Pi_r(\mathcal{S}) f')(x) = (\Pi_{r+1}(\mathcal{X}) f)'(x), \quad x \in I. 
\end{equation} 
Let us now consider the function $ f(x) = e^{\imath \pi x} $ and its derivative $ f'(x) = \imath \pi e^{\imath \pi x} = \imath \pi f(x)$. As $\|f'\| = \pi$, we get
    $$ \Lambda_r(\mathcal{S}^{\mathrm{eq}}) \geq \frac1{\pi} \Vert \Pi_r(\mathcal{S}^{\mathrm{eq}}) f' \Vert = \frac1{\pi} \Vert (\Pi_{r+1}(\mathcal{X}^{\mathrm{eq}}) f )'\Vert \geq \frac{1}{\pi} \frac{|(\Pi_{r+1}(\mathcal{X}^{\mathrm{eq}}) f)(0.5) - 1|}{0.5}   > \frac{1}{\pi} \frac{2^{r-1}}{r^2} ,$$
    where we used \eqref{eq:commutationderivative} in the second step and the inequality $(\Pi_{r+1}(\mathcal{X}^{\mathrm{eq}}) f)(0.5) < - \frac{2^{r-2}}{r^2}$ proven in \cite{Trefethen} in the last step.
\end{proof}

Lemma \ref{lem:lowerbound} shows that the generalised Lebesgue constant associated with uniform segments grows at least exponentially. The following result is an immediate consequence of Proposition \ref{prop:upperbound} and helps in obtaining the opposite bound. 

\begin{lemma} \label{lem:upperbound}
	One has
	$$ \Lambda_r(\mathcal{S}^{\mathrm{eq}}) \leq 2 r^2 \Lambda_{r+1}(\mathcal{X}^{\mathrm{eq}}), $$
    where $\Lambda_{r+1}(\mathcal{X}^{\mathrm{eq}})$ is the nodal Lebesgue constant associated with the  uniform nodes $\mathcal{X}^{\mathrm{eq}}$ on $I$.
\end{lemma}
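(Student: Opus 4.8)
The plan is to obtain this bound as a direct specialization of Proposition \ref{prop:upperbound} to the equidistant configuration on $I = [-1,1]$. First I would identify the relevant quantities appearing in that proposition. For the uniform nodes $\mathcal{X}^{\mathrm{eq}} = \{\xi_i^{\mathrm{eq}} = -1 + 2i/r\}_{i=0}^r$, the extremal nodes coincide with the endpoints of $I$, namely $a = \xi_0^{\mathrm{eq}} = -1$ and $b = \xi_r^{\mathrm{eq}} = 1$, so that $b - a = 2$. Since these nodes define exactly the concatenated segments $s_i^{\mathrm{eq}} = [\xi_{i-1}^{\mathrm{eq}}, \xi_i^{\mathrm{eq}}]$ of the class \ref{itm:one}, Proposition \ref{prop:upperbound} is applicable with $\mathcal{S} = \mathcal{S}^{\mathrm{eq}}$ and $\mathcal{X} = \mathcal{X}^{\mathrm{eq}}$.

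Next I would compute the maximal segment length. Because consecutive uniform nodes differ by exactly $2/r$, every segment has the common length $|s_i^{\mathrm{eq}}| = \xi_i^{\mathrm{eq}} - \xi_{i-1}^{\mathrm{eq}} = 2/r$, so the maximum is trivial to evaluate: $\max_{1 \le i \le r} |s_i^{\mathrm{eq}}| = 2/r$. This uniformity is the only structural feature of the equidistant case that is used.

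Finally I would substitute $b - a = 2$ and $\max_i |s_i^{\mathrm{eq}}| = 2/r$ into the estimate of Proposition \ref{prop:upperbound} and simplify the prefactor:
\[
\Lambda_r(\mathcal{S}^{\mathrm{eq}}) \leq \frac{2}{b-a}\left(\max_{1 \le i \le r} |s_i^{\mathrm{eq}}|\right) r^3 \, \Lambda_{r+1}(\mathcal{X}^{\mathrm{eq}}) = \frac{2}{2}\cdot \frac{2}{r}\cdot r^3 \, \Lambda_{r+1}(\mathcal{X}^{\mathrm{eq}}) = 2 r^2 \, \Lambda_{r+1}(\mathcal{X}^{\mathrm{eq}}),
\]
which is precisely the claimed inequality. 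There is no genuine obstacle in this argument: the entire content is the bookkeeping of the constants in Proposition \ref{prop:upperbound}, and the cancellation $\tfrac{2}{b-a}\cdot\max_i|s_i^{\mathrm{eq}}|\cdot r^3 = 2r^2$ follows immediately from the equal spacing of the uniform nodes.
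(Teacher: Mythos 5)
Your proposal is correct and is essentially identical to the paper's own proof, which likewise specializes Proposition \ref{prop:upperbound} to $I=[-1,1]$ with $b-a=2$ and $\max_i|s_i^{\mathrm{eq}}|=2/r$ to obtain the prefactor $2r^2$. The only cosmetic difference is that the paper also invokes the invariance result of Proposition \ref{prop:invarianceLeb} to note the bound is independent of the reference interval, which your direct computation on $I=[-1,1]$ renders unnecessary.
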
 

\begin{proof}
    By the invariance of the generalised Lebesgue constant proved in Proposition \ref{prop:invarianceLeb}, this is immediate from Proposition \ref{prop:upperbound} with the interval $ I = [-1,1]$, so that $ h = | s_i | = \frac{2}{r} $ for each $ i $.
\end{proof}

Several sharp estimates are known in literature for $ \Lambda_{r+1} (\mathcal{X}^{\mathrm{eq}})$, see \cite{Brutman1,Trefethen} and the references therein. Lemma \ref{lem:upperbound} hence provides an exponential upper bound for $\Lambda_r (\mathcal{S}^{\mathrm{eq}})$, ensuring an exponential growth of the generalised Lebesgue constant associated with uniform segments in $I$. As final estimates, we may collect the results above and offer
\begin{equation} \label{eq:uniformbounds}
    \frac{1}{\pi} \frac{2^{r-1}}{r^2}\leq \Lambda_r (\mathcal{S}^{\mathrm{eq}}) \leq r 2^{r+4}
\end{equation}
as boundaries of  $\Lambda_r (\mathcal{S}^{\mathrm{eq}})$.
These bounds are visualized in Fig. \ref{fig:uniformestimates}, by computing the Lebesgue constants $\Lambda_r (\mathcal{S}^{\mathrm{eq}})$ up to degree $ 40 $. An implementation strategy for this case can be found in \cite{ComputingWeights}.

\begin{figure}[h]
    \centering
    \includegraphics[width = 7.5cm]{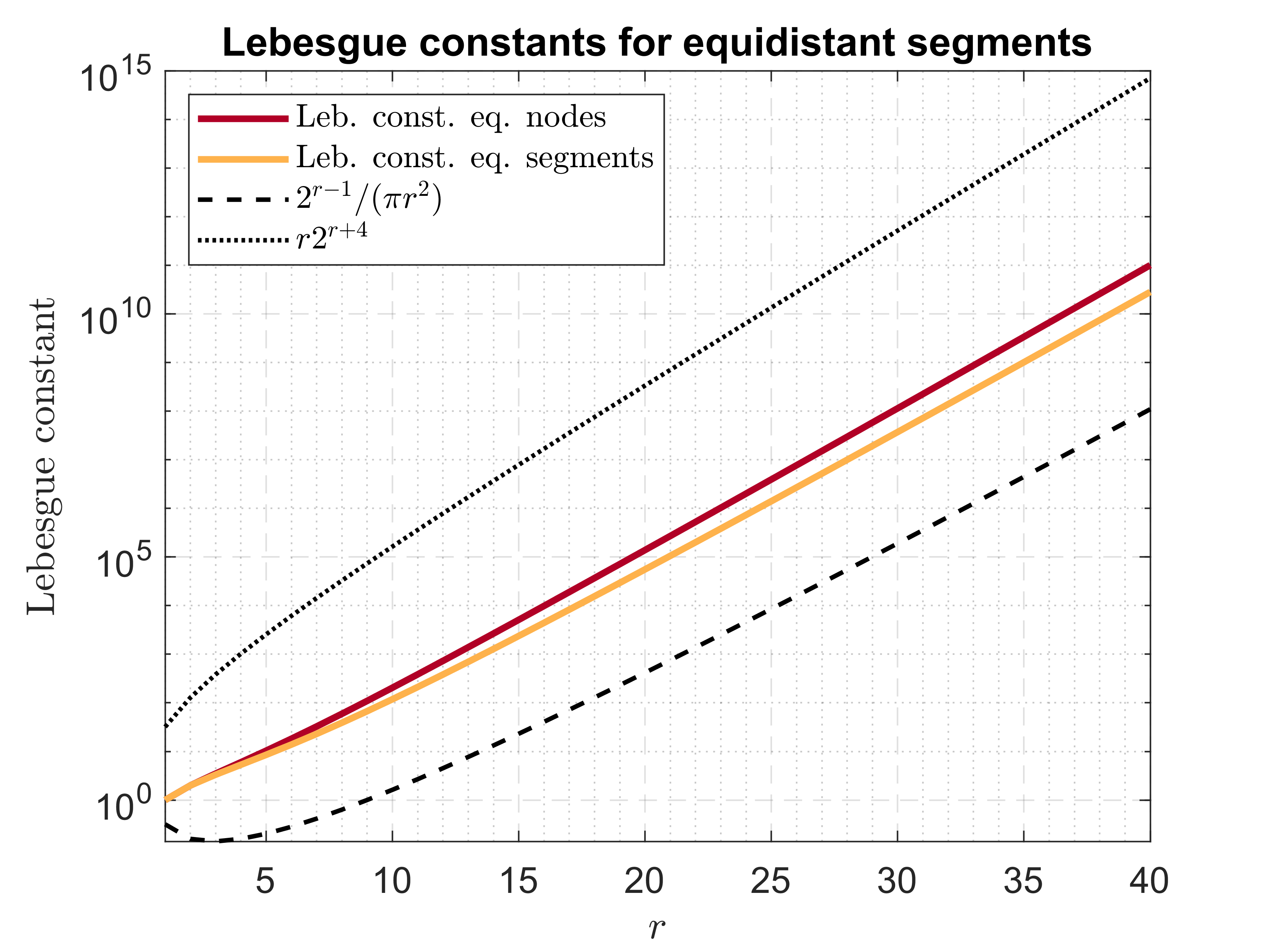} 
    \includegraphics[width = 7.5cm]{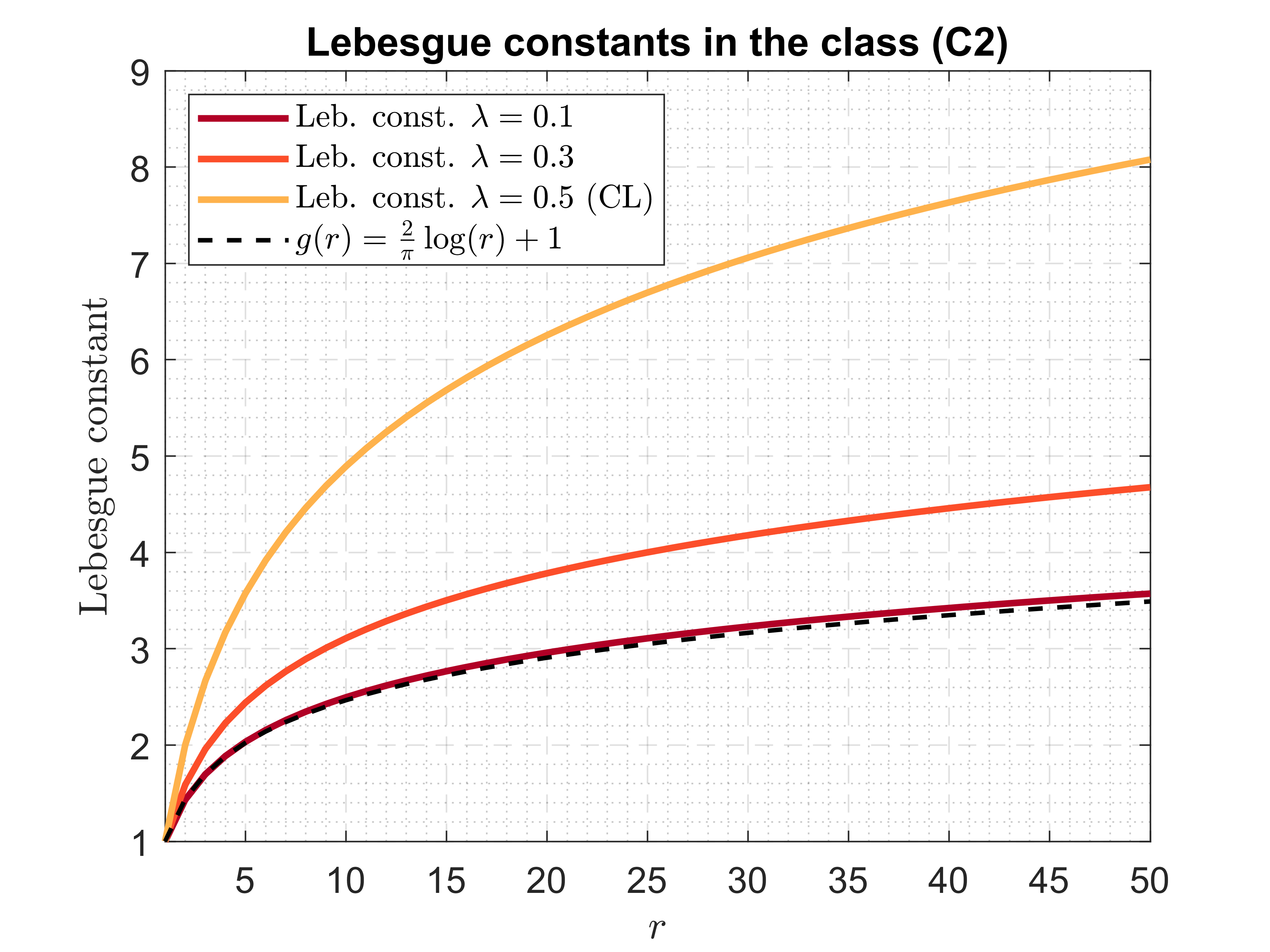}
    \vspace{-.7cm}
    \caption{Left: Lebesgue constants for $r$ equidistant nodes and segments in  \ref{itm:one}. The dashed and dotted lines indicate the derived bounds. Right: Lebesgue constants for $r$ segments in the class \ref{itm:two} for different values of $\lambda$. The CL segments correspond to $\lambda = 0.5$. The dashed line approximates the nodal Lebesgue constant for $\lambda \to 0$.}
    \label{fig:uniformestimates}
\end{figure}

\subsection{The class \ref{itm:two}: condition numbers for segments with constant arc-length}

In the class \ref{itm:two}, the central formula linking the given data $\mu_i$ with the interpolating polynomial $p_{r-1}$ is given by Eq. \eqref{eq:interpcondC2}. We can formulate the generalized interpolation process with help of two linear mappings. In the first map, we consider the point set 
$\mathcal{T} = \{ \cos \tau_i \; : \; i \in \{1, \ldots, r\}\}$ describing the arc-midpoints of the segments $s_i$. We then define the linear operator
\begin{align} \label{eq:operatorphi}
	\Phi_{r}: \quad L_{\infty}(I) &\to \mathbb{P}_{r-1}, \quad 
	f \mapsto \sum_{i=1}^r \frac{1}{|s_i|}\mu(f,s_i) \ell_{\cos \tau_i}(x) = q_{r-1}(x),
\end{align}
This operator provides the unique polynomial $q_{r-1}$ that interpolates the data $\frac{1}{|s_i|}\mu(f,s_i)$ on the arc-midpoints $\cos \tau_i$. The respective uniform norm $\Vert \Phi_{r} f \Vert$ can be bounded from above by
\[ \|\Phi_{r} f \| = \sup_{x \in I} \left\vert \sum_{i=1}^r  \frac{\mu(f,s_i)}{|s_i|}\ell_{\cos \tau_i}(x)\right\vert \leq \max_{1 \leq i \leq r} \frac{|\mu(f,s_i)|}{|s_i|} \sup_{x \in I}  \sum_{i=1}^r  \left\vert\ell_{\cos \tau_i}(x)\right\vert \leq \Lambda_r(\mathcal{T}) \|f\|. \]
In other words, the operator norm $\|\Phi_{r} \|_{\mathrm{op}}$ is bounded by the Lebesgue constant of the (nodal) interpolation operator for the nodes $\mathcal{T}$. As a second linear operator, we consider the integral operator $K_{\tau}: L_{\infty}(I) \to C(I)$ defined for $0 < \rho < \pi$ and $0<t<\pi$ by
\[K_{\rho} f(\cos t) \doteq \frac{1}{\cos (t - \rho) - \cos(t + \rho)} \int_{\cos(t + \rho)}^{\cos (t - \rho)}f(x) \de x.\]
For the endpoints $t \in \{0,\pi\}$, we can define $K_{\rho} f$ by taking the respective limits. For the Chebyshev polynomials $U_j(x)$ we further get
\begin{align} \label{eq:spectraldecompositionK}  K_{\rho} U_j(\cos t) &= \frac{1}{\cos (t - \rho) - \cos(t + \rho)} \int_{\cos(t + \rho)}^{\cos (t - \rho)} U_j(x) \de x \\ &= \frac{1}{j+1}\frac{T_{j+1}(\cos (t - \rho)) - T_{j+1}(\cos (t + \rho))}{\cos (t - \rho) - \cos(t + \rho)} \notag \\ 
& = \frac{1}{j+1}\frac{\sin((j+1)t) \sin((j+1)\rho)}{\sin(t) \sin(\rho)} = \frac{1}{j+1}\frac{\sin((j+1)\rho)}{\sin(\rho)} U_j(\cos t). \notag
\end{align}
This means that the Chebyshev polynomials $U_j$ are eigenfunctions of the integral operator $K_{\rho}$ with respect to the eigenvalues $\frac{1}{j+1}\frac{\sin((j+1)\rho)}{\sin(\rho)}$ and that $K_{\rho}$ maps $\mathbb{P}_{r-1}$ into $\mathbb{P}_{r-1}$ for every $r$. Further, $K_{\rho}$ is invertible on $\mathbb{P}_{r-1}$ if $\rho < \pi/r$. We denote the respective restriction by $K_{\rho,r} \doteq K_{\rho}\vert_{\mathbb{P}_{r-1}}$. In view of the interpolation problem, the operator $K_{\rho,r}$ maps the interpolation polynomial $p_{r-1}$ onto the polynomial $q_{r-1}$. Using the two operators $K_{\rho,r}$ and $\Phi_{r}$, we can now formulate the generalized interpolation problem on the segments $\mathcal{X}$ of the class \ref{itm:two} as 
\[ p_{r-1}(x) = \Pi_r(\mathcal{S}) f (x) = K_{\rho,r}^{-1} \Phi_{r} f (x).\]
This description of the interpolation separates the impact of the arc-midpoints from the arc-radius $\rho$ of the segments. For the numerical conditioning in the class \ref{itm:two} we get now the following result.

\begin{proposition} \label{prop:upperboundC2}
Let $\mathcal{S}$ be a set of segments in the class \ref{itm:two} with constant arc-radius $0 < \tau < \frac{\pi}{r}$. Further, let $ \Lambda_{r} (\mathcal{T}) $ be the Lebesgue constant associated with the set $\mathcal{T}$ of arc-midpoints $\cos \tau_i$ of the segments $s_i = [\cos(\tau_i + \rho), \cos(\tau_i - \rho)]$. Then, the numerical conditioning of the interpolation problem associated to the segments $\mathcal{S}$ is bounded by  
$$ \|\Pi_r(\mathcal{S})\|_{\mathrm{op}} \leq \, \|K_{\rho,r}^{-1}\|_{\mathrm{op}} \, \Lambda_{r}(\mathcal{T}).$$ 
\end{proposition}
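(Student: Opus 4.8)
The plan is to read the bound off directly from the operator factorization $\Pi_r(\mathcal{S}) = K_{\rho,r}^{-1}\,\Phi_r$ established just before the statement, combined with the already-derived estimate $\|\Phi_r\|_{\mathrm{op}} \le \Lambda_r(\mathcal{T})$ and the submultiplicativity of the operator norm. In essence the proposition is a one-line consequence of these two prepared ingredients, so the work lies almost entirely in verifying that the composition and its norms are correctly set up under the hypothesis on the arc-radius.

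First I would confirm that the composition is legitimately defined when $0 < \rho < \tfrac{\pi}{r}$. Since $\Phi_r$ maps $L_{\infty}(I)$ into $\mathbb{P}_{r-1}$ and $K_{\rho,r} = K_{\rho}\vert_{\mathbb{P}_{r-1}}$ is a linear endomorphism of the finite-dimensional space $\mathbb{P}_{r-1}$, it suffices that $K_{\rho,r}$ be invertible there. This is guaranteed by the spectral computation in \eqref{eq:spectraldecompositionK}: the eigenvalues of $K_{\rho}$ on $\mathbb{P}_{r-1}$ are $\frac{1}{j+1}\frac{\sin((j+1)\rho)}{\sin(\rho)}$ for $j \in \{0, \ldots, r-1\}$, and the constraint $\rho < \tfrac{\pi}{r}$ forces $0 < (j+1)\rho < \pi$ for each such $j$, so that all eigenvalues are strictly positive. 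Hence $K_{\rho,r}^{-1}$ exists as a bounded operator on $(\mathbb{P}_{r-1}, \|\cdot\|)$, and the factorization $\Pi_r(\mathcal{S}) = K_{\rho,r}^{-1}\Phi_r \colon L_{\infty}(I) \to \mathbb{P}_{r-1}$ is well-posed.

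Second, I would apply submultiplicativity of the operator norm to the composition,
\[ \|\Pi_r(\mathcal{S})\|_{\mathrm{op}} = \|K_{\rho,r}^{-1}\Phi_r\|_{\mathrm{op}} \le \|K_{\rho,r}^{-1}\|_{\mathrm{op}}\, \|\Phi_r\|_{\mathrm{op}}, \]
where all norms are understood with respect to the uniform norm $\|\cdot\|$: as the norm of the endomorphism of $\mathbb{P}_{r-1}$ for $K_{\rho,r}^{-1}$, and as the operator norm from $L_{\infty}(I)$ to $\mathbb{P}_{r-1}$ for $\Phi_r$ and $\Pi_r(\mathcal{S})$. Inserting the previously established bound $\|\Phi_r\|_{\mathrm{op}} \le \Lambda_r(\mathcal{T})$ then immediately yields the claimed inequality.

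The argument is short, and I expect no serious obstacle, since both substantive inputs—the factorization and the estimate on $\|\Phi_r\|_{\mathrm{op}}$—are already in place. The single point warranting care is the bookkeeping of domains and norms: one must read $\|K_{\rho,r}^{-1}\|_{\mathrm{op}}$ as the norm of the endomorphism of $\mathbb{P}_{r-1}$, so that it pairs correctly with the range of $\Phi_r$, rather than as an operator norm on all of $L_{\infty}(I)$ or $C(I)$, where $K_{\rho}$ fails to be invertible and the product would be meaningless.
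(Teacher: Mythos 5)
Your proposal is correct and matches the paper's (implicit) argument exactly: the proposition is read off from the factorization $\Pi_r(\mathcal{S}) = K_{\rho,r}^{-1}\Phi_r$ and the bound $\|\Phi_r\|_{\mathrm{op}} \le \Lambda_r(\mathcal{T})$ established immediately beforehand, via submultiplicativity of the operator norm. Your added check that the eigenvalues $\tfrac{1}{j+1}\tfrac{\sin((j+1)\rho)}{\sin\rho}$ are nonzero under $\rho < \pi/r$ is a sound and welcome verification of the invertibility of $K_{\rho,r}$ that the paper only asserts.
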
 

In order to guarantee a low order conditioning of the interpolation in the class \ref{itm:two}, it is therefore important that the nodal Lebesgue constant $\Lambda_{r}(\mathcal{T})$ for the arc-midpoints $\mathcal{T}$ as well as the norms $\|K_{\rho,r}^{-1}\|_{\mathrm{op}}$ are small. The following lemma ensures that if the arc-radius $\rho$ is properly adapted to the number $r$ of segments, then the norm $\|K_{\rho,r}^{-1}\|_{\mathrm{op}}$ is uniformly bounded. 

\begin{lemma} \label{lem:upperboundnormK}
    For $\rho(r) = \frac{\lambda \pi}{r}$ with $0 < \lambda < 1$, the operator norms $\|K_{\rho(r),r}^{-1}\|_{\mathrm{op}}$ are uniformly bounded in $r \in \N$. The respective explicit bound depends only on the parameter $\lambda$.
\end{lemma}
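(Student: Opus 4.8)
The plan is to exploit the exact spectral description from \eqref{eq:spectraldecompositionK}, by which $K_{\rho,r}$ is diagonalised by the Chebyshev basis with $K_{\rho}U_{j}(\cos t)=\frac{1}{j+1}\frac{\sin((j+1)\rho)}{\sin\rho}U_{j}(\cos t)$, so that $K_{\rho,r}^{-1}$ acts as the multiplier $U_{j}\mapsto\frac{(j+1)\sin\rho}{\sin((j+1)\rho)}U_{j}$ on $\mathbb{P}_{r-1}$. The decisive difficulty is that the required estimate is for the \emph{uniform} norm, whereas $\{U_j\}$ is far from orthonormal there (for instance $\|U_{r-1}\|=r$); hence $\|K_{\rho,r}^{-1}\|_{\mathrm{op}}$ is \emph{not} simply the largest eigenvalue modulus $\frac{(j+1)\sin\rho}{\sin((j+1)\rho)}$, and a genuine multiplier estimate in $\|\cdot\|$ is unavoidable.

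First I would pass to the angular variable $x=\cos t$ and lift each $q\in\mathbb{P}_{r-1}$ to the odd trigonometric polynomial $\phi(t)=q(\cos t)\sin t=\sum_{n=1}^{r}a_{n-1}\sin(nt)$ of degree $\le r$, using $U_{n-1}(\cos t)\sin t=\sin(nt)$; note that $\|q\|=\sup_{t}|\phi(t)/\sin t|$. On the sine coefficients, $K_{\rho,r}^{-1}$ multiplies the $n$-th coefficient by $\frac{n\sin\rho}{\sin(n\rho)}=\frac{\sin\rho}{\rho}\,W(n\rho)$, where $W(u)=u/\sin u$. The key point is that for $\rho=\lambda\pi/r$ the sampling points $n\rho$, $n\in\{1,\dots,r\}$, all lie in $(0,\lambda\pi]$, and since $\lambda<1$ the even function $W$ is analytic and bounded there together with all its derivatives, with bounds depending only on $\lambda$ (and diverging as $\lambda\to1$, consistently with $K_{\rho,r}$ becoming singular at $\lambda=1$, where $\sin(r\rho)=\sin\pi=0$).

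The core of the argument is to realise the multiplier $n\mapsto W(n\rho)$ as convolution against an even kernel $\Omega_{\rho}$ on $[-\pi,\pi]$ with $\widehat{\Omega}_{\rho}(n)=W(n\rho)$ for $|n|\le r$. Fixing a smooth, even, rapidly decaying extension $w$ of $W$ from $[-\lambda\pi,\lambda\pi]$ to $\R$, Poisson summation identifies $\Omega_{\rho}$ with the periodisation of $\tfrac{2\pi}{\rho}\,\check w(\cdot/\rho)$, an approximate identity concentrating at scale $\rho\sim1/r$ with
\begin{equation*}
\tfrac{1}{2\pi}\|\Omega_{\rho}\|_{L^{1}[-\pi,\pi]}\le\|\check w\|_{L^{1}(\R)}=:C_{1}(\lambda),
\end{equation*}
a bound independent of $r$. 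Since $\phi$ has degree $\le r$, its image under $K_{\rho,r}^{-1}$ is exactly $\frac{\sin\rho}{\rho}\,\Omega_{\rho}\!*\phi$, and because translations are isometries for the uniform norm, convolution by $\Omega_{\rho}$ controls $\|\phi\|_{\infty}$ by $C_{1}(\lambda)\|\phi\|_{\infty}$.

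The main obstacle, and the step I expect to be the most technical, is passing back from the sine-weighted lift to the polynomial itself, i.e. taming the weight $1/\sin t$ near $x=\pm1$, since $\|q\|=\sup_t|\phi(t)/\sin t|$ is genuinely larger than $\|\phi\|_{\infty}$. Writing $\frac{\sin(t-u)}{\sin t}=\cos u-\sin u\,\cot t$ in the resulting integral representation $(K_{\rho,r}^{-1}q)(\cos t)=\frac{\sin\rho}{\rho}\frac{1}{2\pi}\int\Omega_{\rho}(u)\frac{\sin(t-u)}{\sin t}q(\cos(t-u))\,\de u$, the $\cos u$ part is harmless and contributes at most $C_{1}(\lambda)\|q\|$, while the $\cot t$ part is delicate. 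Here I would use the cancellation in $q(\cos(t+u))-q(\cos(t-u))=-\int_{t-u}^{t+u}q'(\cos s)\sin s\,\de s$, whose factor $\sin s$ compensates the $1/\sin t$, together with the localisation of $\Omega_{\rho}$ at scale $\rho$ and Markov's inequality $\|q'\|_{[-1,1]}\le(r-1)^{2}\|q\|$: the combination $\rho^{2}(r-1)^{2}\le(\lambda\pi)^{2}$ remains bounded in $r$, so the endpoint contribution is also controlled by a constant depending only on $\lambda$. Combining the bulk and endpoint estimates yields $\|K_{\rho,r}^{-1}\|_{\mathrm{op}}\le C(\lambda)$ uniformly in $r\in\N$, as claimed.
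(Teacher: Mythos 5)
Your argument is correct in outline, but it takes a genuinely different route from the paper. Both proofs start from the spectral identity \eqref{eq:spectraldecompositionK} and reduce the claim to a uniform-norm bound for the Chebyshev-$U$ multiplier $U_{j}\mapsto\frac{(j+1)\sin\rho}{\sin((j+1)\rho)}U_{j}$; at that point the paper simply extends the multiplier to a continuous, compactly supported profile $\varphi_{\lambda}$ (with a piecewise-linear cutoff on $[1,2]$) and invokes Vinogradov's theorem on Fourier--Jacobi multipliers, which delivers uniform boundedness together with the exact limiting value \eqref{eq:explicitvaluenormM} of the norms. You instead prove the multiplier bound from scratch: the sine-lift $\phi(t)=q(\cos t)\sin t$, the Poisson-summation construction of an $L^{1}$-bounded approximate identity $\Omega_{\rho}$ at scale $\rho$, and the endpoint cancellation combining the evenness of $\Omega_{\rho}$ with Markov's inequality and $\rho^{2}r^{2}=\lambda^{2}\pi^{2}$. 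This is essentially a self-contained proof of the special case of Vinogradov's result that the paper needs, and it has the merit of making visible exactly where $\lambda<1$ and the scaling $\rho\sim 1/r$ enter ($W(u)=u/\sin u$ bounded on $[0,\lambda\pi]$, and the bounded combination $\rho^{2}(r-1)^{2}$); the price is more technical overhead and a less explicit constant than \eqref{eq:explicitvaluenormM}. Two details in your sketch deserve care but are not gaps: the second-moment bound $\int_{-\pi}^{\pi}|\Omega_{\rho}(u)|\sin^{2}u\,\de u\lesssim\rho^{2}$ requires enough decay of $\check{w}$ to control the nonzero periodisation terms, and in the $\cot t$ estimate it is cleaner to apply the mean value theorem directly between $\cos(t-u)$ and $\cos(t+u)$ (giving $|q(\cos(t-u))-q(\cos(t+u))|\leq 2\Vert q'\Vert\,|\sin t\,\sin u|$, so that $\cot t$ is exactly compensated) than to argue through $\int_{t-u}^{t+u}q'(\cos s)\sin s\,\de s$, where $\sin s$ may change sign near the endpoints.
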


\begin{proof} To show the uniform boundedness, we will make use of the spectral decomposition \eqref{eq:spectraldecompositionK} of  $K_{\rho(r),r}$, as well as a result of Vinogradov for multipliers in Chebyshev expansions. For $0<\lambda<1$, we define the function $\varphi_{\lambda}$ on $[0,\infty)$ as
\[ \varphi_{\lambda}(u) = \left\{ \begin{array}{ll}
     \frac{\lambda u \pi}{\sin(\lambda u \pi)} & \text{if $u \leq 1$},\\ (2 - u) \frac{\lambda \pi}{\sin(\lambda \pi)}
     & \text{if $1 < u \leq 2$} \\
     0 & \text{$u > 2$}.
\end{array} \right.\]
As $\varphi_\lambda$ is continuous and compactly supported on $[0,2]$, we have $\varphi_\lambda \in C([0,\infty))$, $ \| u \varphi_{\lambda}(u) \|_2 < \infty $ and $\sum_{j=1}^\infty \textstyle\left( \frac{j}{r} \varphi_{\lambda}(\frac{j}{r})\right)^2 < \infty$ for all $r \in \mathbb{N}$. For a continuous function $f$ with the expansion $f = \sum_{j=0}^\infty b_j U_j$ we define the operator $\mathcal{M}_{\lambda,r}: C(I) \to C(I)$ by 
\[\mathcal{M}_{\lambda,r} f(x) = \sum_{j=0}^\infty \varphi_{\lambda}(\textstyle \frac{j+1}{r})b_j U_j(x).\]
Then, by \cite[Theorem 5]{Vinogradov01}, the operators $\mathcal{M}_{\lambda,r}$ are uniformly bounded by
\begin{equation} \label{eq:explicitvaluenormM}
  \sup_{r \in \N }\|\mathcal{M}_{\lambda,r}\|_{\mathrm{op}} = \lim_{r \to \infty }\|\mathcal{M}_{\lambda,r}\|_{\mathrm{op}} =\frac{2}{\pi} \int_{0}^\infty z \left\vert \int_0^\infty u \varphi_{\lambda}(u) \sin (z u) \de u \right\vert \de z,  
\end{equation} 
the last integral being finite with the stated properties of the function $\varphi_{\lambda}$. Now, note that by the spectral decomposition \eqref{eq:spectraldecompositionK} we have for $p \in \mathbb{P}_{r-1}$ and $\rho(r) = \frac{\lambda \pi}{r}$ the identity 
\[ \mathcal{M}_{\lambda,r} p(x) = \frac{r\sin\frac{\lambda \pi}{r} }{ \lambda \pi} (K_{\rho(r),r}^{-1} p) (x).\]
We therefore get for the operator norms
\begin{equation} \label{eq:upperboundoperatorK}
    \sup_{r \in \N} \|K_{\rho(r),r}^{-1} \|_{\mathrm{op}} \leq \frac{\lambda \pi}{\sin \lambda \pi} \: \sup_{r \in \N}\|\mathcal{M}_{\lambda,r}\|_{\mathrm{op}}
\end{equation}
and, thus, the statement of the lemma.
\end{proof}

\begin{remark} Proposition  \ref{prop:upperboundC2} and Lemma \ref{lem:upperboundnormK} imply that if the arc-radius $\rho$ is decreasing proportionally to $1/r$ then the asymptotic growth of the numerical condition number $\|\Pi_r(\mathcal{S})\|_{\mathrm{op}}$ depends in essence on the growth of the nodal Lebesgue constant on the arc-midpoints $\mathcal{T}$. 

The spectral radius of the operator $K_{\rho(r),r}^{-1}$, $\rho(r) = \frac{\lambda \pi}{r}$ is given as $\frac{\lambda \pi}{\sin \lambda \pi}$. Combined with \eqref{eq:upperboundoperatorK}, we can therefore estimate $\|K_{\rho(r),r}^{-1} \|_{\mathrm{op}}$ as
\begin{equation} \label{eq:upperlowerboundoperatorK}
    \frac{\lambda \pi}{\sin \lambda \pi} \leq \sup_{r \in \N} \|K_{\rho(r),r}^{-1} \|_{\mathrm{op}} \leq \frac{\lambda \pi}{\sin \lambda \pi} \: \sup_{r \in \N}\|\mathcal{M}_{\lambda,r}\|_{\mathrm{op}}.
\end{equation}
Thus, as soon as $\lambda$ approaches $1$, the norms $\|K_{\rho(r),r}^{-1}\|_{\mathrm{op}}$ increase rapidly. For $\rho(1) = \frac{\pi}{r}$ the operator $K_{\pi/r,r}$ is not invertible and the respective segment set $\mathcal{S}$ not unisolvent in $\mathbb{P}_{r-1}$. This loss of uniqueness was already predicted in Proposition \ref{prop:uniquenessC2}. In the limit $\lambda \to 0$, we get $K_{0,r} = \mathrm{id}$ the identity on $\mathbb{P}_{r-1}$. This corresponds to the case of nodal interpolation on the set $\mathcal{T}$.  
\end{remark}

\begin{corollary} \label{cor:logarithmicgrowth}
Let $\mathcal{S}^{\mathrm{CL}}$ be the Chebyshev-Lobatto segments in $I$. Then, the Lebsgue constant $\Lambda_r(\mathcal{S}^{\mathrm{CL}}) = \| \Pi_r(\mathcal{S}^{\mathrm{CL}})\|_{\mathrm{op}}$ is bounded by
\[ \frac1{2} \left( \frac{4}{\pi^2} \ln r - 1 \right) \leq 
\Lambda_r(\mathcal{S}^{\mathrm{CL}}) \leq  \left( \ln r + \frac{\pi}{2} \right)\, \sup_{r \in \N}\|\mathcal{M}_{1/2,r}\|_{\mathrm{op}} , \]
with a formula for the supremum $\sup_{r \in \N}\|\mathcal{M}_{1/2,r}\|_{\mathrm{op}}$ given in \eqref{eq:explicitvaluenormM}.
\end{corollary}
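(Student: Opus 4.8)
The plan is to assemble results already established in the excerpt, exploiting the fact that the Chebyshev--Lobatto segments lie simultaneously in classes \ref{itm:one} and \ref{itm:two} and using each membership for a different part of the statement.

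First, the identity $\Lambda_r(\mathcal{S}^{\mathrm{CL}}) = \|\Pi_r(\mathcal{S}^{\mathrm{CL}})\|_{\mathrm{op}}$ and the left inequality are essentially free. As concatenated segments in class \ref{itm:one}, the $s_i^{\mathrm{CL}}$ are pairwise non-overlapping (they share only endpoints, so $|s_i\cap s_j|=0$ for $i\neq j$), and hence Theorem \ref{thm:operatornorm} gives the equality between the Lebesgue constant and the operator norm. The lower bound then follows directly from the general projection estimate \eqref{eq:atleastloggrowth}, since $\Pi_r(\mathcal{S}^{\mathrm{CL}})$ is a linear projection of $C(I)$ onto $\mathbb{P}_{r-1}$.

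For the upper bound I would switch to the class \ref{itm:two} viewpoint. The CL segments have uniform arc-radius $\rho = \frac{\pi}{2r}$, i.e.\ $\rho = \rho(r)$ with $\lambda = 1/2 < 1$, so $\rho < \pi/r$ and both Proposition \ref{prop:upperboundC2} and Lemma \ref{lem:upperboundnormK} apply. The key observation is that the arc-midpoints $\cos\tau_i$ run exactly through the points $\cos\frac{(2m+1)\pi}{2r}$, $m=0,\ldots,r-1$, which are the zeros of $T_r$; in other words, the node set $\mathcal{T}$ is precisely the set of Chebyshev--Gauss nodes of the first kind. Proposition \ref{prop:upperboundC2} then yields $\|\Pi_r(\mathcal{S}^{\mathrm{CL}})\|_{\mathrm{op}} \leq \|K_{\rho,r}^{-1}\|_{\mathrm{op}}\,\Lambda_r(\mathcal{T})$, and it remains to bound the two factors.

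For the first factor I would set $\lambda = 1/2$ in \eqref{eq:upperboundoperatorK}, noting $\frac{\lambda\pi}{\sin\lambda\pi} = \frac{\pi/2}{\sin(\pi/2)} = \frac{\pi}{2}$, to get $\|K_{\rho,r}^{-1}\|_{\mathrm{op}} \leq \frac{\pi}{2}\sup_{r\in\N}\|\mathcal{M}_{1/2,r}\|_{\mathrm{op}}$. For the second factor I would invoke the classical logarithmic bound $\Lambda_r(\mathcal{T}) \leq \frac{2}{\pi}\ln r + 1$ for the Lebesgue constant of the Chebyshev--Gauss nodes (see e.g.\ \cite{Brutman1,Trefethen}). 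Multiplying the two estimates gives $\frac{\pi}{2}\bigl(\frac{2}{\pi}\ln r + 1\bigr)\sup_r\|\mathcal{M}_{1/2,r}\|_{\mathrm{op}} = \bigl(\ln r + \frac{\pi}{2}\bigr)\sup_r\|\mathcal{M}_{1/2,r}\|_{\mathrm{op}}$, which is exactly the claimed right-hand side. The only real obstacle here is the clean identification of $\mathcal{T}$ with the Chebyshev--Gauss nodes together with the sharp constant in their nodal Lebesgue bound, which is precisely what makes the arithmetic collapse to $\ln r + \frac{\pi}{2}$; the genuinely heavy analytic input, namely the uniform multiplier estimate for $\|\mathcal{M}_{1/2,r}\|_{\mathrm{op}}$, has already been carried out in Lemma \ref{lem:upperboundnormK}.
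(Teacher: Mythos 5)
Your proposal is correct and follows essentially the same route as the paper: Proposition \ref{prop:upperboundC2} combined with the bound \eqref{eq:upperboundoperatorK} at $\lambda=1/2$ and the classical estimate $\Lambda_r(\mathcal{T})\leq \frac{2}{\pi}\ln r +1$ for the Chebyshev nodes yields the upper bound, while the lower bound and the identity with the operator norm come from \eqref{eq:atleastloggrowth} and Theorem \ref{thm:operatornorm} exactly as you describe. Your write-up merely makes explicit the arithmetic and the identification of the arc-midpoints with the Chebyshev--Gauss nodes, which the paper's proof leaves terse.
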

A similar logarithmic growth of the Lebesgue constant is also obtained if we use segments with the same arc-midpoints as in $\mathcal{S}^{\mathrm{CL}}$ but with a different arc-radius $\rho = \lambda \frac{\pi}{r}$, $\lambda< 1$. The Lebesgue constants for different values of the parameter $\lambda$ are compared in Fig. \ref{fig:uniformestimates}.  

\begin{proof}
For the CL segments, the arc-midpoints $\mathcal{T}$ correspond to the standard Chebyshev nodes on $I$. A close upper bound for the corresponding nodal Lebesgue constant is given by $\Lambda_r(\mathcal{T}) \leq \frac{2}{\pi} \ln r + 1$ (cf. \cite{Brutman1}). This, together with Proposition  \ref{prop:upperboundC2} and Eq. \eqref{eq:upperboundoperatorK}, yields the statement. 
\end{proof}

From Proposition \ref{prop:convergenceDL} and Corollary \ref{cor:logarithmicgrowth} we get finally the following convergence result.

\begin{corollary}
 Let $\mathcal{S}^{\mathrm{CL}}$ be the Chebyshev-Lobatto segments in $I$ and $f \in C(I)$. We have
$$ \|\Pi_r(\mathcal{S}^{\mathrm{CL}}) f - f\|_\infty \leq \Lambda_r(\mathcal{S}^{\mathrm{CL}}) \, \omega(\pi/r,f).$$
In particular, if $f$ satisfies a Dini-Lipschitz condition of the form $\omega(\frac1r,f) \ln r \to 0$ as $r \to \infty$, then $\Pi_r(\mathcal{S}^{\mathrm{CL}}) f$ converges uniformly to $f$.    
\end{corollary}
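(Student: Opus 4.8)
The plan is to obtain the pointwise error bound as a direct specialization of Proposition~\ref{prop:convergenceDL} and then to feed in the logarithmic growth from Corollary~\ref{cor:logarithmicgrowth}. For the first inequality, Proposition~\ref{prop:convergenceDL} already gives $\|f - \Pi_r(\mathcal{S}^{\mathrm{CL}})f\| \le \Lambda_r(\mathcal{S}^{\mathrm{CL}})\,\omega(f,h)$ with $h = \sup_{x\in I}\min_{1\le i\le r}\sup_{y\in s_i}|x-y|$ (here $\omega(f,\cdot)$ is the modulus of continuity, written $\omega(\cdot,f)$ in the statement). Since $\omega(f,\cdot)$ is nondecreasing in its second argument, it suffices to show $h \le \pi/r$. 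The key observation is that the CL segments form a chain covering $I$, so every $x \in I$ lies in some $s_i^{\mathrm{CL}}=[\xi_{i-1}^{\mathrm{CL}},\xi_i^{\mathrm{CL}}]$, and for that segment $\sup_{y\in s_i^{\mathrm{CL}}}|x-y| = \max(x-\xi_{i-1}^{\mathrm{CL}},\,\xi_i^{\mathrm{CL}}-x) \le |s_i^{\mathrm{CL}}|$; hence $h \le \max_i |s_i^{\mathrm{CL}}|$.

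I would then bound $\max_i |s_i^{\mathrm{CL}}|$ via the explicit length formula already recorded in the Example, namely $|s_i^{\mathrm{CL}}| = 2\sin(\tfrac{\pi}{2r})\sin(\tfrac{2i-1}{2r}\pi) \le 2\sin(\tfrac{\pi}{2r}) \le \tfrac{\pi}{r}$, using $\sin u \le u$. This yields $h \le \pi/r$, and therefore $\omega(f,h)\le\omega(f,\pi/r)$, which is exactly the asserted first inequality once inserted into Proposition~\ref{prop:convergenceDL}.

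For the convergence statement, I would combine this with the upper bound of Corollary~\ref{cor:logarithmicgrowth}, namely $\Lambda_r(\mathcal{S}^{\mathrm{CL}}) \le (\ln r + \tfrac{\pi}{2})\,C$ with $C = \sup_{r\in\N}\|\mathcal{M}_{1/2,r}\|_{\mathrm{op}} < \infty$, to get $\|\Pi_r(\mathcal{S}^{\mathrm{CL}})f - f\| \le C\,(\ln r + \tfrac{\pi}{2})\,\omega(f,\pi/r)$. Next I would invoke sub-additivity of the modulus of continuity, $\omega(f,\pi/r)\le \lceil\pi\rceil\,\omega(f,1/r) = 4\,\omega(f,1/r)$, to reduce the bound to the argument $1/r$. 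Splitting the resulting product as $(\ln r)\,\omega(f,1/r) + \tfrac{\pi}{2}\,\omega(f,1/r)$, the first term tends to $0$ by the Dini--Lipschitz hypothesis $\omega(f,1/r)\ln r \to 0$, while the second tends to $0$ because $f$ is uniformly continuous on the compact interval $I$, so that $\omega(f,1/r)\to 0$. Hence the right-hand side vanishes and $\Pi_r(\mathcal{S}^{\mathrm{CL}})f \to f$ uniformly.

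I do not anticipate a genuine obstacle here, as the argument is an assembly of earlier results with elementary facts about the modulus of continuity. The only points requiring care are the estimate $h\le\pi/r$, which rests on the covering property of the chain of CL segments together with the explicit length formula and $\sin u \le u$, and the final limit, where the passage from $\omega(f,\pi/r)$ to $\omega(f,1/r)$ via sub-additivity must be made so that the Dini--Lipschitz condition applies, the residual term $\tfrac{\pi}{2}\,\omega(f,1/r)$ being controlled separately by uniform continuity.
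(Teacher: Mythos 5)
Your argument is correct and follows exactly the route the paper intends (the paper states the corollary as an immediate consequence of Proposition~\ref{prop:convergenceDL} and Corollary~\ref{cor:logarithmicgrowth} without writing out the details). Your explicit verification that $h \le \max_i |s_i^{\mathrm{CL}}| = 2\sin(\tfrac{\pi}{2r}) \le \pi/r$, and the subadditivity step passing from $\omega(f,\pi/r)$ to $\omega(f,1/r)$ so that the Dini--Lipschitz hypothesis applies, correctly fill in the two small gaps the paper leaves implicit.
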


\begin{figure}[h]
    \centering
    \includegraphics[width = 7.5cm]{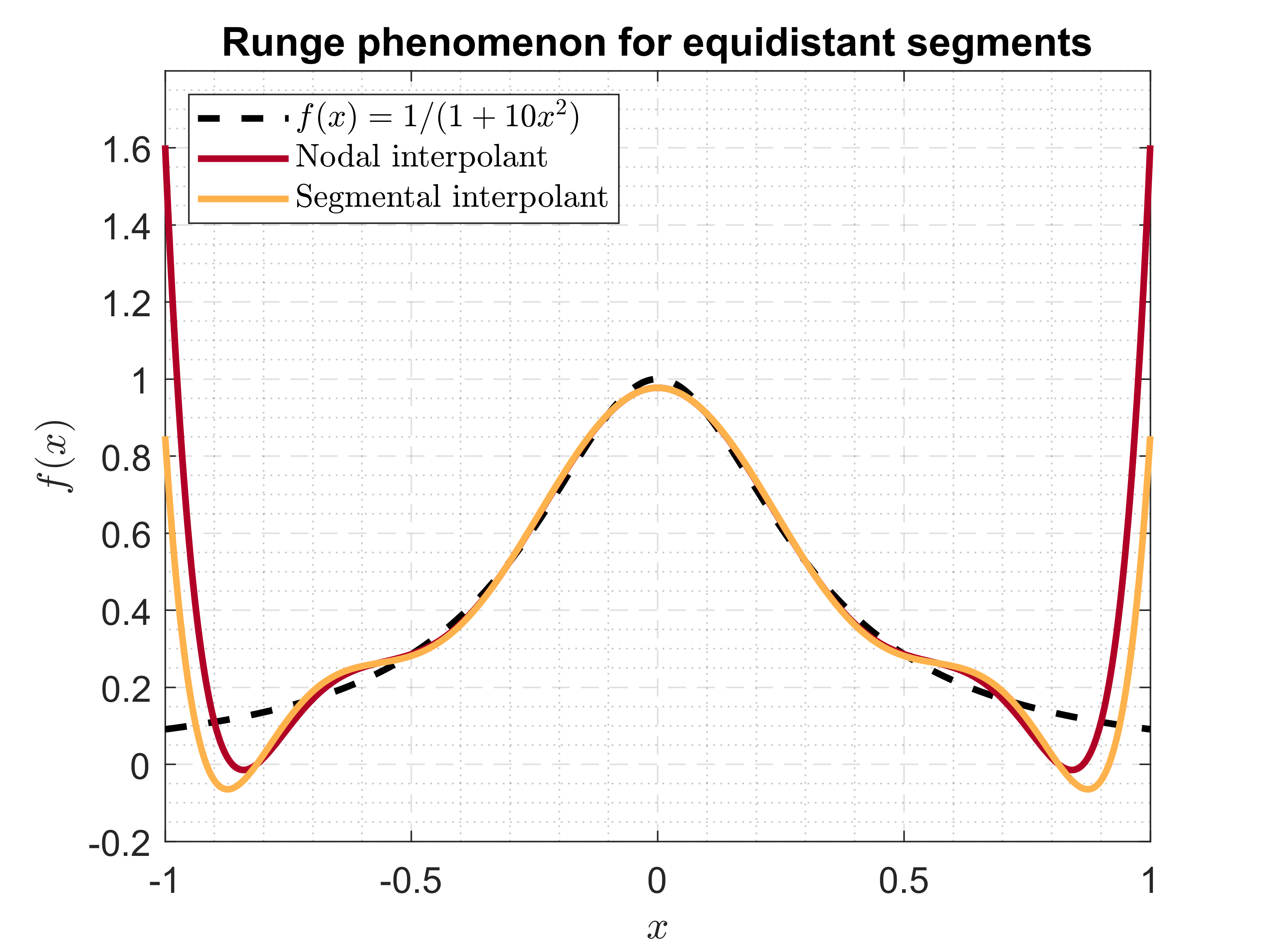} 
    \includegraphics[width = 7.5cm]{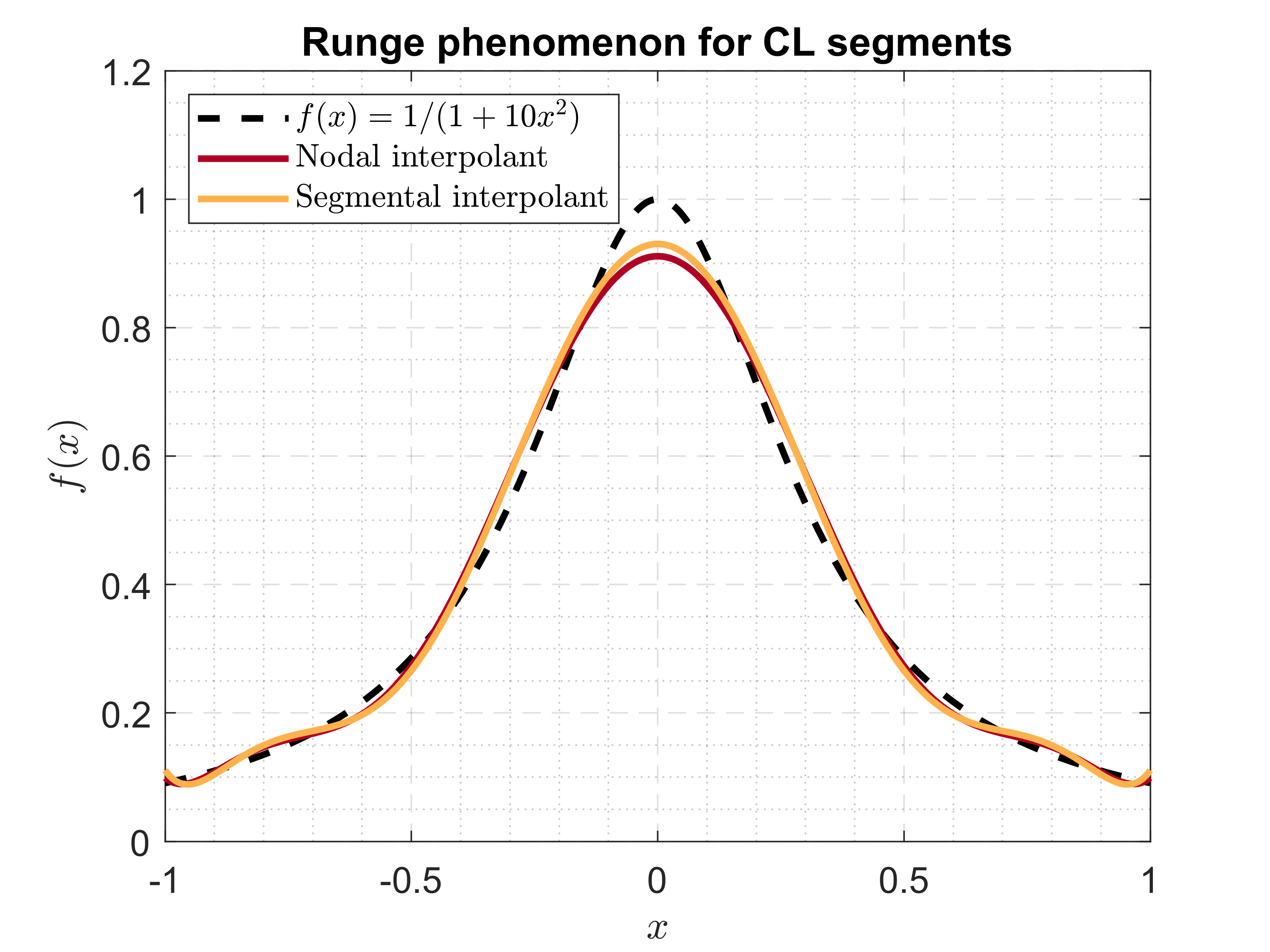}
    \vspace{-.5cm}
    \caption{Effects of bad conditioning in the example of the Runge function $f(x) = 1/(1+10x^2)$. Left: interpolation of the Runge function using $10$ equidistant segments (and $10$ equidistant nodes). Right: polynomial interpolation of $f$ with $10$ CL segments (and $10$ Chebyshev nodes).}
    \label{fig:rungephenomenon}
\end{figure}

\section{A Whitney forms perspective} \label{sect:WFperspective}

The interest in the generalised Lebesgue constant \eqref{eq:genLeb} is motivated by \emph{Whitney forms} \cite{Whitneybook}. This class of differential forms is widely used in finite element approximation \cite{AFW06, BossavitBook} and, more recently, has been also considered for pure interpolation purposes \cite{BruniRunge}. Whitney forms are a relevant space of \emph{polynomial differential forms}; in particular, they naturally come at play when one considers \emph{simplicial elements}.

As integration on $k$-simplices is a natural operation on differential $k$-forms, the interpolation problem onto the space of Whitney forms $ \mathcal{P}_r^- \Lambda^k $ is sharply understood in terms of \emph{weights} \cite{RB09}: for any (sufficiently regular) form $ \omega \in \Lambda^k $, find $ \Pi \omega \in \mathcal{P}_r^- \Lambda^k $ such that
$$ \int_{s_i} \omega = \int_{s_i} \Pi \omega \qquad \forall s_i \in S_r^k .$$
Here $ S_r^k $ is thus a collection of $k$-simplices, usually referred to as \emph{small simplices} \cite{Rapetti07}, and the selection of this set seriously affects the reliability of the interpolator $ \Pi $ \cite{BruniRunge, BruniThesis} (an example is here depicted in Fig. \ref{fig:rungephenomenon}). A measure of the quality of this operator is in fact the \emph{generalised Lebesgue constant} \cite{GenLeb}, whose definition is stated in terms of differential forms as
\begin{equation} \label{eq:generalisedLebconst}
	\Lambda_r^k \doteq \sup_{c \in \mathcal{C}_r^k} \frac{1}{|c|} \sum_{s_i \in S_r^k} | s_i | \left\vert \int_c \omega_{s_i} \right\vert .
\end{equation}
In this equation, the supremum is sought over the collection of $ k$-chains $ \mathcal{C}_r^k $ and $ \omega_{s_i} $ is the basis in the duality induced by weights over the collection of support $ S_r^k $. 
The characterisation of the set $ \mathcal{C}_r^k $ significantly rises the complexity of the problem and creates a relevant obstacle to the proof of theoretical bounds. Generalised Lebesgue constants have in fact been only numerically estimated; some computations can be found in \cite{BruniEdges,BruniFaces,BruniThesis}.

When $ k = n = 1 $, there is an identification of $ 1 $-forms with functions that associates any $ 1$-form $ f(x) \de x $ simply with $ f(x) $. Plugging this into Eq. \eqref{eq:lagrange1form}, it is hence immediate to deduce that
$$ \omega_{s_j} = \sum_{k=j}^r \ell_{\xi_k}'(x) \de x  $$
is the $j$-th element of the cardinal basis of $ \mathcal{P}_r^- \Lambda^1 $. This, together with the characterisation of the generalised Lebesgue constant in terms of averages on segments given in Section \ref{sect:Lebconstinterpsegment}, proves that Eq. \eqref{eq:generalisedLebconst} boils down to Eq. \eqref{eq:genLeb}, overcoming the impasse of the estimation of the factor $ | c | $ appearing at the denominator. As a consequence, one deduces that all the results stated and proved in this work for segments and averages can be identically cast in the language of differential forms. Thanks to the equality $ \mathcal{P}_r^- \Lambda^1 = \mathcal{P}_{r-1} \Lambda^1 $ (valid on the real line) one furthermore deduces that these results not only apply to Whitney forms but exhaust the whole case of $ 1 $-forms.

\section{Conclusions} \label{sec-conclusion}

In this work, we studied a generalisation of polynomial interpolation in which the fitted data consists of function averages over interval segments. We provided conditions for unisolvence and explicit formulae for Lagrange-type bases. Furthermore, we derived theoretical bounds for the growth of the Lebesgue constant linked to segmental interpolation. 
The behavior of these bounds resembles well-known results from classical nodal interpolation, although definitions and techniques sensibly diverge. In this work, we confined ourselves to the one dimensional setting, and we expect to generalise these results, using the language of differential forms, in forthcoming works. The developed techniques will be further useful to transfer related interpolation and quadrature methods as for instance mapped basis approaches \cite{platte,DeMarchi2020} or polynomial quadrature rules \cite{Cappellazzo2023} from a nodal to a segmental setting.

\section*{Acknowledgements}

This research has been accomplished within the research networks RITA and UMI-TAA, and was partially funded by GNCS-IN$\delta$AM. The first author is funded by IN$\delta$AM and supported by Universit\`a di Padova.

\end{document}